\newtheorem{thm}{Theorem}
\newtheorem*{cor}{Corollary}
\newtheorem{theorem}{Theorem}[section]
\newtheorem{lemma}[theorem]{Lemma}
\newtheorem{proposition}[theorem]{Proposition}
\theoremstyle{definition}
\newtheorem{construction}[theorem]{Construction}
\newtheorem*{cstr}{Construction}
\newtheorem{question}{Question}
\newtheorem{remark}[theorem]{Remark}
\newtheorem*{rmk}{Remark}
\begin{document}

\title{On the $C_p$-equivariant dual Steenrod algebra}
\author{Krishanu Sankar and Dylan Wilson}
\date{}
\maketitle

\begin{abstract}
We compute the
$C_p$-equivariant dual Steenrod algebras associated to the
constant Mackey functors $\underline{\mathbb{F}}_p$ and $\underline{\mathbb{Z}}_{(p)}$, as 
$\underline{\mathbb{Z}}_{(p)}$-modules. The $C_p$-spectrum
$\underline{\mathbb{F}}_p \otimes \underline{\mathbb{F}}_p$
is not a direct sum of $RO(C_p)$-graded suspensions
of $\underline{\mathbb{F}}_p$ when $p$ is odd, in contrast
with the classical and $C_2$-equivariant dual Steenrod algebras.
\end{abstract}

\tableofcontents

\section*{Introduction}

For over a decade, since the Hill-Hopkins-Ravenel solution
of the Kervaire invariant one problem \cite{hhr}, there has been
great success in using exotic homotopy theories, like
$C_{2^n}$-equivariant homotopy theory and motivic homotopy theory,
to study classical homotopy theory at the prime 2.
A key foundational input to many of these applications
is the computation of the appropriate version of the dual
Steenrod algebra, $\underline{\mathbb{F}}_2 \otimes
\underline{\mathbb{F}}_2$, which was carried out by
Hu-Kriz \cite{hu-kriz} in $C_2$-equivariant homotopy theory and by
Voevodsky \cite{voevodsky} in motivic homotopy theory.
One of the major obstacles to carrying out a similar program at odd primes
is that we do not understand the structure
of the dual Steenrod algebra in $C_p$-equivariant homotopy theory.
The purpose of this paper is to make some progress towards this goal.

To motivate the statement of our main result, recall that
we have the following description of the
classical, $p$-local dual Steenrod algebra
as a $\mathbb{Z}_{(p)}$-algebra\footnote{We learned this fact from John Rognes.
One proof is to base change the equivalence \\$\mathrm{BP}\otimes_{S^0[v_1, ...]}S^0
\simeq \mathbb{Z}_{(p)}$ to $\mathbb{Z}_{(p)}$ and use that the Hurewicz image
of the $v_i$'s are $pt_i$, mod decomposables.}

\[
\mathbb{Z}_{(p)} \otimes \mathbb{Z}_{(p)}
\simeq
\mathbb{Z}_{(p)} \otimes 
\bigotimes_i
\mathrm{cofib}\left( \Sigma^{|t_i|}S^0[t_i]
\stackrel{\cdot pt_i}{\longrightarrow} S^0[t_i]\right).
\]
Here the tensor product is taken over the sphere spectrum,
$S^0[x]$ denotes the free $\mathbb{E}_1$-algebra on
a class $x$,
and the classes $t_i$ live in degree $2p^i-2$.
Modding out by $p$ causes each of the above cofibers to split
into two classes related by a Bockstein; modding out by
$p$ once more introduces the class $\tau_0$ and recovers
Milnor's computation of $\mathcal{A}_* = \pi_*(\mathbb{F}_p
\otimes \mathbb{F}_p)$, as an $\mathbb{F}_p$-algebra.

In the $C_p$-equivariant case our description involves a similar
decomposition but is more complicated in two ways:
	\begin{itemize}
	\item Rather than extending the class
	$t_i$ to a map from $S^0[t_i]$ using the multiplication
	on $\mathbb{Z}\otimes \mathbb{Z}$, we will
	want to choose as generators a mixture of
	\emph{ordinary powers} of $t_i$ and of
	\emph{norms}, $N(t_i)$, of $t_i$.
	\item Rather than modding out by the relation
	`$pt_i=0$' we will need to enforce the relation
	that `$\theta t_i = 0$', where $\theta$ is
	an equivariant
	lift of $p$ to an element in nontrivial $RO(C_p)$-degree.
	We will then \emph{also} need to enforce the relation
	$pN(t_i) = 0$. 
	\end{itemize}
	
To make this precise, we will assume that the reader
is comfortable with equivariant stable homotopy theory
as used, for example, in \cite{hhr}, and introduce the following
conventions, in force throughout the paper:
	\begin{itemize}
	\item We will use $\rho_{C_p}$ to denote
	the regular representation of $C_p$.
	\item We will use $\lambda$ to denote the
	representation of $C_p$ on $\mathbb{R}^2=\mathbb{C}$
	where the generator acts by $e^{2\pi i/p}$.
	\item We denote by $\theta: S^{\lambda - 2} \to S^0$
	the map of $C_p$-spectra arising from the degree
	$p$ cover $S^{\lambda} \to S^2$. We'll denote the
	cofiber of $\theta$ by $C\theta$. Note that
	the underlying nonequivariant spectrum of $C\theta$ is the
	Moore space $M(p)$. 
	\item If $X$ is a spectrum, we will denote by
	$N(X)$ the Hill-Hopkins-Ravenel norm of $X$,
	which is a $C_p$-equivariant refinement of
	the ordinary spectrum $X^{\otimes p}$.
	\item We denote by $\underline{\mathbb{Z}}$ and
	$\underline{\mathbb{F}}_p$ the $C_p$-equivariant
	Eilenberg-MacLane spectra associated to the constant
	Mackey functors at $\mathbb{Z}$ and $\mathbb{F}_p$,
	respectively.
	\item We use $\otimes$ to denote the symmetric monoidal
	structure on genuine $C_p$-spectra, $\mathsf{Sp}^{C_p}$ (often denoted
	by $\wedge$, the smash product).
	\item The degree $k$ map
	\[
	S^{\lambda} \to S^{\lambda^k}
	\]
	is a $p$-local equivalence when $(k, p) = 1$, so, when
	working $p$-locally, we will often make this identification
	implicitly. For example, we may write
	\[
	S^{\rho_{C_p}}_{(p)} = S_{(p)}^{1+\frac{p-1}{2} \lambda}.
	\]
	\item We use $\pi_{\star}X$ to denote the $RO(C_p)$-graded
	homotopy groups of a $C_p$ spectrum, so that, when
	$\star = V-W$ is a virtual representation,
	$\pi_{V-W}X = \pi_0\mathrm{Map}_{\mathsf{Sp}^{C_p}}(S^{V-W}, X)$.
	\end{itemize}

Now we can give a somewhat ad-hoc description of the equivariant
refinements of the building blocks in $\mathbb{Z} \otimes \mathbb{Z}$.

\begin{cstr}
Let $x$ be a formal variable in an $RO(C_p)$-grading
$|x|$. Define a $C_p$-spectrum as follows:
	\[
	T_{\theta}(x):=
	\Sigma^{|x|}C\theta
	\oplus
	\Sigma^{2|x|}C\theta
	\oplus
	\cdots
	\oplus
	\Sigma^{(p-1)|x|}C\theta
	\oplus
	\Sigma^{|x|\rho_{C_p}}M(p),
	\]
where $M(p)$ is the mod $p$ Moore spectrum.
We denote the inclusion of the summand
$\Sigma^{i|x|}C\theta$ by
	\[
	x^{i-1}\hat{x}: \Sigma^{i|x|}C\theta \to T_{\theta}(x),
	\]
the restriction of $\hat{x}$ to the bottom cell by
$x$,
and the inclusion of the final summand by
$\widehat{Nx}$. We denote by
	\[
	Nx: S^{|x|\rho_{C_p}} \to T_{\theta}(x)
	\]
 the restriction of $\widehat{Nx}$ to the bottom
 cell of the mod $p$ Moore spectrum.
 
Now suppose that $R$ is a $C_p$-ring spectrum equipped with a norm
$N(R) \to R$. If we have a class $x \in \pi_{\star}R$ such that
$\theta x = 0$, it follows that $p\cdot N(x) = 0$ (see the proof of
Lemma \ref{lem:pn=0}),
so we may produce a map
	\[
	S^0 \oplus (S^0[Nx] \otimes T_{\theta}(x)) \to R,
	\]
which only depends on the choice of the nullhomotopy witnessing
$\theta x = 0$. 
\end{cstr}

We can now state our main theorem.

\begin{thm}\label{thm:main} There are equivariant refinements
	\[
	t_i: S^{2p^{i-1}\rho_{C_p} - \lambda} \to
	\underline{\mathbb{Z}}_{(p)} \otimes \underline{\mathbb{Z}}_{(p)}
	\]
of the nonequivariant classes $t_i \in 
\pi_*(\mathbb{Z}_{(p)}\otimes \mathbb{Z}_{(p)})$
which satisfy the relation
$\theta t_i = 0$. 
For any choice of witness for these relations, the
resulting map
	\[
	\underline{\mathbb{Z}}_{(p)} \otimes
	\bigotimes_{i\ge 1}
	\left( S^0 \oplus (S^0[Nt_i] \otimes T_{\theta}(t_i))\right)
	\longrightarrow 
	\underline{\mathbb{Z}}_{(p)} \otimes \underline{\mathbb{Z}}_{(p)}
	\]
is an equivalence. 
\end{thm}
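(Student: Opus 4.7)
The plan is threefold: first, produce the equivariant refinements $t_i$ and verify $\theta t_i = 0$; second, assemble the map using the construction preceding the statement; and third, show the map is an equivalence of $\underline{\mathbb{Z}}_{(p)}$-modules by reducing to checks on underlying spectra and on $C_p$-geometric fixed points.

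For the first step, I need to exhibit
\[
t_i \colon S^{2p^{i-1}\rho_{C_p} - \lambda} \longrightarrow \underline{\mathbb{Z}}_{(p)} \otimes \underline{\mathbb{Z}}_{(p)}
\]
whose nonequivariant underlying is the classical Milnor generator (the dimensions agree, $2p^{i-1}\cdot p - 2 = 2p^i - 2$). A natural source of such lifts is the equivariant $BP$ story: the Hill--Hopkins--Ravenel norms of classical $BP$ generators produce $RO(C_p)$-graded classes in the claimed bidegree, and composing with the canonical map to $\underline{\mathbb{Z}}_{(p)} \otimes \underline{\mathbb{Z}}_{(p)}$ gives the desired $t_i$; alternatively one could build $t_i$ directly by obstruction theory in the slice filtration. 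The composite $\theta t_i$ then lies in $\pi_{2p^{i-1}\rho_{C_p} - 2}(\underline{\mathbb{Z}}_{(p)} \otimes \underline{\mathbb{Z}}_{(p)})$; underlying $\theta$ is multiplication by $p$, and $pt_i = 0$ in the classical Milnor algebra, so the vanishing holds on underlying, and I would lift it to an equivariant nullhomotopy via a slice-filtration obstruction argument, possibly after correcting $t_i$ by a class of lower slice filtration.

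Given the $t_i$ and the chosen nullhomotopies, the construction preceding the theorem produces the map
\[
f \colon \underline{\mathbb{Z}}_{(p)} \otimes \bigotimes_{i\ge 1} \bigl( S^0 \oplus (S^0[Nt_i] \otimes T_\theta(t_i)) \bigr) \longrightarrow \underline{\mathbb{Z}}_{(p)} \otimes \underline{\mathbb{Z}}_{(p)}
\]
as a map of $\underline{\mathbb{Z}}_{(p)}$-modules. Both source and target are built cellularly from $RO(C_p)$-graded suspensions of $\underline{\mathbb{Z}}_{(p)}$, so $f$ is an equivalence if and only if it is one on underlying spectra and on $C_p$-geometric fixed points. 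On underlying, each factor $S^0 \oplus (S^0[Nt_i] \otimes T_\theta(t_i))$ simplifies (using that $C\theta$ underlies $M(p)$ and $|t_i|\rho_{C_p}$ underlies $p|t_i|$) to $S^0 \oplus \bigoplus_{n\ge 1} \Sigma^{n|t_i|} M(p)$, which is precisely the underlying of $\mathrm{cofib}(\Sigma^{|t_i|} S^0[t_i] \xrightarrow{\cdot p t_i} S^0[t_i])$; tensoring with $\mathbb{Z}_{(p)}$ then recovers the classical Milnor decomposition recalled in the introduction, so the underlying of $f$ is an equivalence.

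The main obstacle is the geometric fixed points check. One must compute $\Phi^{C_p}(\underline{\mathbb{Z}}_{(p)} \otimes \underline{\mathbb{Z}}_{(p)})$---via, for example, the isotropy separation square---and match it against the geometric fixed points of the source, which decompose monoidally into $\Phi^{C_p}$ of the building blocks $C\theta$, $M(p)$, and $S^0[Nt_i]$. The norm factor contributes a polynomial algebra on the image of $Nt_i$, while the cofibers $C\theta$ contribute Euler-class torsion coming from $\lambda$. Verifying that our specific lifts $t_i$ map to algebraically independent generators of the target's geometric fixed points, so that this monoidal decomposition on the source maps isomorphically onto the target, is the principal technical difficulty of the argument.
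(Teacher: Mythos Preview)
Your overall architecture---construct the $t_i$, build the map, then check underlying and geometric fixed points---matches the paper's, but two of your three steps have genuine gaps that the paper fills with ideas you have not identified.

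\textbf{Construction of $t_i$ and the relation $\theta t_i = 0$.} Your proposed sources for $t_i$ are problematic. There is no ``equivariant $BP$ story'' at odd primes analogous to $BP\mathbf{R}$ at $p=2$, so ``norms of classical $BP$ generators'' does not obviously produce classes in $\underline{\mathbb{Z}}_{(p)}\otimes\underline{\mathbb{Z}}_{(p)}$; and ``obstruction theory in the slice filtration'' is not an argument, only a hope. More seriously, your plan for $\theta t_i = 0$---observe that it holds on underlying (since $\mathrm{res}(\theta)=p$) and then ``lift via a slice-filtration obstruction argument, possibly after correcting $t_i$''---is not a proof: vanishing on underlying does not imply equivariant vanishing, and you give no mechanism to kill the obstruction. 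The paper instead constructs $t_i$ concretely as the homology suspension of the class $e_{p^i}$ in $\pi_{\star}(\underline{\mathbb{Z}}\otimes K(\underline{\mathbb{Z}},\lambda)_+)$, using $K(\underline{\mathbb{Z}},\lambda)\simeq \mathrm{B}_{C_p}S^1$. The relation $\theta t_i = 0$ then follows from the multiplicative structure of that homology (e.g.\ $e_1^p \doteq \theta e_p$) together with the fact that suspension kills decomposables; no obstruction theory is needed.

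\textbf{Geometric fixed points.} You correctly flag this as the principal difficulty, but your proposed resolution---``verify that our specific lifts $t_i$ map to algebraically independent generators of the target's geometric fixed points''---is exactly what the paper \emph{avoids}, because it is not known where the classes $\hat{t}_i$ go on geometric fixed points (the paper poses this as an open question). The paper's key device is a detection criterion (its Proposition~\ref{prop:detect-equiv}): for a map of bounded-below $\underline{\mathbb{Z}}$-modules which is an underlying equivalence, it suffices that the geometric fixed points of source and target be free $\mathbb{F}_p[b]$-modules of the same finite rank in each degree. This reduces the geometric fixed points step to a pure dimension count, with no need to track the map. Without this lemma (whose proof passes through a comparison of $(-)^{\Phi C_p}[b^{-1}]$ with $(-)^{tC_p}$), your argument does not close.
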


As an immediate corollary we have:

\begin{cor} With notation as above, we have
	\[
	\underline{\mathbb{F}}_p \otimes
	\underline{\mathbb{F}}_p
	\simeq
	\Lambda(\tau_0) \otimes_{\underline{\mathbb{F}}_p}
	\underline{\mathbb{F}}_{p} \otimes
	\bigotimes_{i\ge 1} \left( S^0 \oplus (S^0[Nt_i] \otimes T_{\theta}(t_i))\right),
	\]
where $\tau_0$ is dual to the Bockstein, in degree $1$ and
$\Lambda(\tau_0) = 
\underline{\mathbb{F}}_p \oplus \Sigma \underline{\mathbb{F}}_p$.
In particular, since 
$\underline{\mathbb{F}}_p \otimes C\theta$ is indecomposable
at odd primes, the spectrum $\underline{\mathbb{F}}_p \otimes
\underline{\mathbb{F}}_p$ is \emph{not} a direct sum of
$RO(C_p)$-graded suspensions of $\underline{\mathbb{F}}_p$
at odd primes.
\end{cor}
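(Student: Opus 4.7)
The plan is to derive both assertions from Theorem~\ref{thm:main} by base change along $\underline{\mathbb{Z}}_{(p)} \to \underline{\mathbb{F}}_p$. Writing
$$\underline{\mathbb{F}}_p \otimes \underline{\mathbb{F}}_p \simeq \underline{\mathbb{F}}_p \otimes_{\underline{\mathbb{Z}}_{(p)}} \bigl(\underline{\mathbb{Z}}_{(p)} \otimes \underline{\mathbb{Z}}_{(p)}\bigr) \otimes_{\underline{\mathbb{Z}}_{(p)}} \underline{\mathbb{F}}_p$$
and substituting Theorem~\ref{thm:main} for the middle bimodule, the two base changes absorb the outer $\underline{\mathbb{Z}}_{(p)}$ while leaving the factor $\bigotimes_i \bigl(S^0 \oplus (S^0[Nt_i] \otimes T_\theta(t_i))\bigr)$ — built entirely from spheres and Moore spectra — intact. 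This produces the claimed formula modulo the identification $\underline{\mathbb{F}}_p \otimes_{\underline{\mathbb{Z}}_{(p)}} \underline{\mathbb{F}}_p \simeq \Lambda(\tau_0)$, which I would obtain by smashing the cofiber sequence $\underline{\mathbb{Z}}_{(p)} \xrightarrow{p} \underline{\mathbb{Z}}_{(p)} \to \underline{\mathbb{F}}_p$ with $\underline{\mathbb{F}}_p$ over $\underline{\mathbb{Z}}_{(p)}$: since $p$ is null on $\underline{\mathbb{F}}_p$, the resulting sequence splits as $\underline{\mathbb{F}}_p \oplus \Sigma\underline{\mathbb{F}}_p$.

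For the non-splitting statement, the decomposition just established exhibits $\Sigma^{|t_1|}(\underline{\mathbb{F}}_p \otimes C\theta)$ as a wedge summand of $\underline{\mathbb{F}}_p \otimes \underline{\mathbb{F}}_p$, coming from the first $C\theta$-summand of $T_\theta(t_1)$. Suppose for contradiction that $\underline{\mathbb{F}}_p \otimes \underline{\mathbb{F}}_p$ were a direct sum of $RO(C_p)$-graded suspensions of $\underline{\mathbb{F}}_p$. Since $\pi_0\mathrm{End}_{\underline{\mathbb{F}}_p}(\underline{\mathbb{F}}_p) = \mathbb{F}_p$ is local, the Krull--Schmidt--Azumaya theorem applies to such sums, so every wedge summand is itself a direct sum of $RO(C_p)$-graded suspensions of $\underline{\mathbb{F}}_p$. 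Combined with the assumed indecomposability of $\underline{\mathbb{F}}_p \otimes C\theta$ at odd primes, this would force $\underline{\mathbb{F}}_p \otimes C\theta \simeq \Sigma^V \underline{\mathbb{F}}_p$ for some virtual representation $V$. But the underlying nonequivariant spectrum of $\underline{\mathbb{F}}_p \otimes C\theta$ is $\mathbb{F}_p \otimes M(p) \simeq \mathbb{F}_p \oplus \Sigma\mathbb{F}_p$, which has cells in two distinct degrees, whereas the underlying spectrum of any $\Sigma^V\underline{\mathbb{F}}_p$ is a single suspension of $\mathbb{F}_p$ --- contradiction.

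The nonroutine input beyond Theorem~\ref{thm:main} is the indecomposability of $\underline{\mathbb{F}}_p \otimes C\theta$ at odd primes. I expect this is established elsewhere in the paper by showing $\pi_0\mathrm{End}_{\underline{\mathbb{F}}_p}(\underline{\mathbb{F}}_p \otimes C\theta)$ has no nontrivial idempotents, via the defining cofiber sequence $S^{\lambda - 2} \xrightarrow{\theta} S^0 \to C\theta$ together with enough explicit information about $\pi_\star\underline{\mathbb{F}}_p$; this is likely the most delicate step.
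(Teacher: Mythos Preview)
Your argument is correct and is precisely the ``immediate'' deduction the paper has in mind; the paper offers no explicit proof of the corollary beyond the phrase ``As an immediate corollary we have.'' One factual correction: the indecomposability of $\underline{\mathbb{F}}_p \otimes C\theta$ at odd primes is \emph{not} established elsewhere in the paper---it is simply asserted within the corollary statement as a known input, so your sketch of how one would verify it (via idempotents in the endomorphism ring, using the cofiber sequence for $\theta$ and the structure of $\pi_\star\underline{\mathbb{F}}_p$) goes beyond what the paper supplies.

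A minor technical remark on the first paragraph: the equivalence of Theorem~\ref{thm:main} is a priori only one of left $\underline{\mathbb{Z}}_{(p)}$-modules, not of bimodules, so base-changing on both sides simultaneously requires a word of justification. The cleanest fix is to avoid the bimodule formulation entirely: write $\underline{\mathbb{F}}_p \simeq \underline{\mathbb{Z}}_{(p)} \otimes M(p)$ over the sphere, so that $\underline{\mathbb{F}}_p \otimes \underline{\mathbb{F}}_p \simeq (\underline{\mathbb{Z}}_{(p)} \otimes \underline{\mathbb{Z}}_{(p)}) \otimes M(p) \otimes M(p)$, apply Theorem~\ref{thm:main} to the first factor as a mere equivalence of spectra, and then regroup using $\underline{\mathbb{F}}_p \otimes M(p) \simeq \underline{\mathbb{F}}_p \oplus \Sigma\underline{\mathbb{F}}_p = \Lambda(\tau_0)$.
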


\begin{rmk} When $p=2$ we have an
accidental splitting $\underline{\mathbb{F}}_2 \otimes
C\theta \simeq \Sigma^{\sigma -1}\underline{\mathbb{F}}_2
\oplus \Sigma^{\sigma}\underline{\mathbb{F}}_2$, where
$\sigma$ is the sign representation.
\end{rmk}

\begin{rmk} One can show that
$\underline{\mathbb{F}}_p \otimes C\theta \otimes C\theta$ splits
as $(\underline{\mathbb{F}}_p\otimes C\theta) \oplus 
(\underline{\mathbb{F}}_p \otimes \Sigma^{\lambda -1}C\theta)$.
It follows that $\underline{\mathbb{F}}_p \otimes \underline{\mathbb{F}}_p$
splits as a direct sum of cell complexes with at most 2 cells.
\end{rmk}

Our result raises a few natural questions which would
be interesting to investigate.

\begin{question} When specialized to $p=2$, how does
our basis compare to the Hu-Kriz basis?
\end{question}

\begin{question} The geometric fixed points of
$\underline{\mathbb{Z}}_{(p)} \otimes \underline{\mathbb{Z}}_{(p)}$
are given by $(\mathbb{F}_p \otimes \mathbb{F}_p)[b, \overline{b}]$,
where $\overline{b}$ is the conjugate of $b$, a class in degree $2$.
It is possible to understand what happens to the generators
$t_i$ and $\widehat{N(t_i)}$ upon taking geometric fixed points.
One is left with trying
to understand the remaining class hit by $\hat{t}_i$ on geometric
fixed points. We don't know what this should be. One
guess that seems consistent with computations is that this class
is given, up to conjugating the $\tau_i$ and modding out by $(b)$, by:
	\[
	\tau_{i-1} + \overline{b}^{p^{i-1}-p^{i-2}} \tau_{i-2} + \cdots + 
	\overline{b}^{p^{i-1}-1}\tau_0
	\]
It would be useful for computations to sort out what actually occurs.
\end{question}

\begin{question} Is it possible to profitably study the
$\underline{\mathbb{F}}_p$-based Adams spectral
sequence using this decomposition? Since $\underline{\mathbb{F}}_p \otimes
\underline{\mathbb{F}}_p$ is not flat over $\underline{\mathbb{F}}_p$,
one would be forced to start with the $E_1$-term. But this is not
an unprecedented situation (e.g. Mahowald had
great success with the $\mathrm{ko}$-based Adams spectral sequence).
\end{question}

\begin{question} Can one describe the multiplication on
$\pi_{\star}\underline{\mathbb{F}}_p \otimes \underline{\mathbb{F}}_p$
in terms of our decomposition?
\end{question}

\subsubsection*{Relation to other work}

As we mentioned before, we were very much motivated
by the description of the $C_2$-equivariant dual Steenrod
algebra given by Hu-Kriz \cite{hu-kriz}. That said,
our generators are slightly different than the Hu-Kriz
generators when we specialize to $p=2$. For example,
the generator $t_1$ lives in degree $2\rho_{C_2}-\lambda
=2$, whereas the Hu-Kriz generator $\xi_1$ lives in degree
$\rho = 1+\sigma$.\footnote{In this low degree, it seems likely that,
modulo decomposables,
we have $u_{\sigma}\xi_1 = t_1$ and that $\xi_1$
is recovered from $\hat{t}_1$ by restricting along
$\underline{\mathbb{F}}_2 \otimes S^{1+\sigma} \to 
\underline{\mathbb{F}}_2 \otimes \Sigma^{2}C\theta$.}
Hill and Hopkins have also obtained a
presentation of the $C_{2^n}$-dual Steenrod
algebra, using quotients of $\mathrm{BP}\mathbf{R}$ and its norms,
which is similar in style to the one obtained here. 

At odd primes, Caruso \cite{caruso}
studied the $C_p$-equivariant Steenrod algebra,
$\pi_{\star}\mathrm{map}(\mathbb{F}_p, \mathbb{F}_p)$,
essentially by comparing
with the Borel equivariant Steenrod algebra and the
geometric fixed point Steenrod algebra, and was able
to compute the ranks of the integer-graded stems. 
There is also work of Oru\c{c} \cite{oruc} computing
the dual Steenrod algebra for the Eilenberg-MacLane
spectra associated to \emph{Mackey fields}
(which does not include $\underline{\mathbb{F}}_p$).

In the \emph{Borel} equivariant setting, the dual Steenrod
algebra is given by the action Hopf algebroid for the
coaction of the classical dual Steenrod
algebra on $H^*(\mathrm{B}C_p)$ (see
\cite{greenlees}).

There is also related work from the first and second
authors. The first author produced a splitting
of $\underline{\mathbb{F}}_p \otimes \underline{\mathbb{F}}_p$
in \cite{sankar} using the symmetric power filtration.
This summands in that splitting were roughly given by
the homology of classifying spaces, and were much
larger than the summands produced here. 
The second author and Jeremy Hahn
showed \cite{hahn-wilson} 
that $\underline{\mathbb{F}}_p$ can be obtained
as a Thom spectrum on $\Omega^{\lambda}S^{\lambda+1}$. 
The Thom isomorphism then reduces the study of
the dual Steenrod algebra to the computation of the homology
of $\Omega^{\lambda}S^{\lambda+1}$. Understanding
the relationship between this picture and the one
in this article is work in progress.

\subsubsection*{Acknowledgements}
We thank Jeremy Hahn and Clover
May for excellent discussions over the course of
this project. We thank Mike Hill for generously sharing
his time and his ideas, especially the idea to use
the norm to build some of the generators of the dual
Steenrod algebra. We apologize to those
awaiting the publication of these results for the long
delay.

\section{Outline of the proof}

To motivate our method of proof, let's first revisit the classical
story. We are interested in where the classes
$t_i \in \pi_*(\mathbb{Z} \otimes \mathbb{Z})$ come from,
and why they are annihilated by $p$.

Recall that the homology of $\mathbb{C}P^{\infty}$
is a divided power algebra
	\[
	H_*(\mathbb{C}P^{\infty}) = \Gamma_{\mathbb{Z}}\{\beta_1\}
	\]
where $\beta_1$ is dual to the first Chern class $c_1$. 
Write $\beta_{(i)}:=\gamma_{p^i}(\beta_1)$. 
Since $\mathbb{C}P^{\infty}=K(\mathbb{Z},2)$, we have
a map of spectra
	\[
	\mathbb{C}P^{\infty}_+ \to \Sigma^2\mathbb{Z}
	\]
and hence a homology suspension map
	\[
	\sigma: H_*(\mathbb{C}P^{\infty}) \to
	\pi_{*-2}(\mathbb{Z} \otimes \mathbb{Z})
	\]
which annihilates elements decomposable with
respect to the product structure on $H_*(\mathbb{C}P^{\infty})$.
We can take\footnote{Depending on ones preferences,
this might be the \emph{conjugate} of the
generator you want; but we are only really concerned
with these classes modulo decomposables.}
$t_i := \sigma(\beta_{(i)})$. The relation $pt_i = 0$ follows
from the fact that $p\beta_{(i)}$ is, up to a $p$-local unit,
decomposable as $\beta_{(i-1)}^{p}$ in $H_*(\mathbb{C}P^{\infty})$.

In the equivariant case, we will proceed similarly.

\begin{itemize}
\item[\underline{Step 1}.] Compute the homology of
$K(\underline{\mathbb{Z}}, \lambda)$ and use the homology
suspension to
define classes in $\pi_{\star}(\underline{\mathbb{Z}} \otimes
\underline{\mathbb{Z}})$.
\item[\underline{Step 2}.] Use information about the
product structure on the homologies of
$K(\underline{\mathbb{Z}}, \lambda)$
and $K(\underline{\mathbb{Z}}, 2)$ to deduce relations
for these classes, and hence produce the map
described in Theorem \ref{thm:main}.
\item[\underline{Step 3}.] Verify that the map
in Theorem \ref{thm:main} is an equivalence
by proving that it is an underlying equivalence and
an equivalence on geometric fixed points.
\end{itemize}

The first step is carried out in \S\ref{sec:homology-cpinfty} and
\S\ref{sec:homology-suspend} by identifying 
$K(\underline{\mathbb{Z}},\lambda)$ with an equivariant version
of $\mathbb{C}P^{\infty}$ and then specializing a computation
due to Lewis \cite{lewis}, which we review in our context.
The second step is carried out in \S\ref{sec:relations}.
The third and final step is carried out in \S\ref{sec:proof}
using a lemma proven in \S\ref{sec:detect-equivalences}
that allows us to check that the map on geometric
fixed points is an equivalence by just verifying that the
source and target have the same dimensions in each degree.

\section{Homology of $\mathrm{B}_{C_p}S^1$}\label{sec:homology-cpinfty}

Recall that we have the $C_p$-space $\mathrm{B}_{C_p}S^1$
classifying equivariant principal $S^1$-bundles.
The following lemmas give two useful ways of thinking about this
space.

\begin{lemma} The complex projective space
$\mathbb{P}(\mathbb{C}[z])$
is a model for $\mathrm{B}_{C_p}S^1$,
where the generator
of $C_p$ acts on $\mathbb{C}[z]$ through ring maps by
$z\mapsto e^{2\pi i/p}z$. Here $\mathbb{C}[z]$ is the ordinary
polynomial ring over $\mathbb{C}$, and the projective space
$\mathbb{P}(\mathbb{C}[z]) = (\mathbb{C}[z] - \{0\})/(\mathbb{C}^{\times})$
inherits an action in the evident way.
\end{lemma}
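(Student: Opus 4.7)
The plan is to exhibit $E := \mathbb{C}[z]\setminus\{0\}$ as a universal $C_p$-equivariant principal $S^1$-bundle with base $\mathbb{P}(\mathbb{C}[z])$. The total space $E$ carries commuting actions of $C_p$ (via $z\mapsto e^{2\pi i/p}z$) and $S^1$ (by scalar multiplication), so these descend to a $C_p$-action on the quotient $\mathbb{P}(\mathbb{C}[z]) = E/S^1$, and the projection $E\to\mathbb{P}(\mathbb{C}[z])$ becomes a $C_p$-equivariant principal $S^1$-bundle.

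To identify the base with $\mathrm{B}_{C_p}S^1$, I would invoke the standard recognition principle: it suffices to show that the $S^1$-action on $E$ is free and that, for every closed subgroup $H \leq C_p\times S^1$ with $H\cap(\{e\}\times S^1)=\{e\}$, the fixed set $E^H$ is contractible. Freeness is immediate, since scalar multiplication on a nonzero vector is free. For any closed $H$ meeting $\{e\}\times S^1$ nontrivially, the positive-dimensional subtorus therein acts freely on $E$, so $E^H=\emptyset$; thus one need only analyze subgroups with trivial intersection with $\{e\}\times S^1$. These are precisely the \emph{graph subgroups} $H_j := \langle (\zeta,\zeta^{-j})\rangle$ for $j\in\{0,1,\ldots,p-1\}$, with $\zeta = e^{2\pi i/p}$.

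Since $z^i$ is a $C_p$-eigenvector with eigenvalue $\zeta^i$, the fixed space $\mathbb{C}[z]^{H_j}$ consists of those $\sum a_i z^i$ with $a_i=0$ unless $i\equiv j\pmod p$, i.e.\ $\mathbb{C}[z]^{H_j} = z^j\mathbb{C}[z^p]$. Hence $E^{H_j} = z^j\mathbb{C}[z^p]\setminus\{0\}$, which is a filtered colimit of spaces $\mathbb{C}^{n+1}\setminus\{0\}\simeq S^{2n+1}$ whose connectivity tends to $\infty$, so it is weakly contractible. This completes the verification. The only real step requiring care is the recognition principle itself, together with the enumeration of closed subgroups of $C_p\times S^1$; once that is set up, the core fixed-point computation is a one-line eigenspace decomposition.
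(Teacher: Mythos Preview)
The paper states this lemma without proof, so there is nothing to compare against directly. Your argument is the standard one and is essentially correct: exhibit $\mathbb{C}[z]\setminus\{0\}$ as a universal $(C_p,S^1)$-bundle by checking that its $H$-fixed points are empty or contractible according to whether $H\cap S^1$ is nontrivial or trivial, and then quotient by $S^1$.

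One small inaccuracy: when $H\cap(\{e\}\times S^1)$ is nontrivial you claim there is a ``positive-dimensional subtorus therein'' acting freely. But a closed subgroup of $C_p\times S^1$ can meet $\{e\}\times S^1$ in a nontrivial \emph{finite} cyclic group, in which case there is no such subtorus. The conclusion is unaffected: any nontrivial element $(e,\omega)$ with $\omega\ne 1$ already acts by scalar multiplication by $\omega$, hence freely on $\mathbb{C}[z]\setminus\{0\}$, so $E^H=\emptyset$ in this case as well. With that one-word fix (replace ``positive-dimensional subtorus'' by ``nontrivial element of $S^1$''), the proof is complete. You might also remark explicitly that the trivial subgroup has $E^{\{e\}}=E$ contractible, though this is immediate.
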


\begin{lemma} The space $\mathrm{B}_{C_p}S^1$ is
a model for $K(\underline{\mathbb{Z}}, \lambda)$.
\end{lemma}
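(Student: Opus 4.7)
The plan is to verify that $\mathrm{B}_{C_p}S^1$ has the homotopy Mackey functors of an equivariant Eilenberg--MacLane space of type $K(\underline{\mathbb{Z}}, \lambda)$. Combined with the fact that $\mathrm{B}_{C_p}S^1$ is naturally a group-like equivariant $\mathbb{E}_\infty$-space (via the tensor-product/Segre pairing on the model $\mathbb{P}(\mathbb{C}[z])$ from the previous lemma), the equivariant recognition principle will then deliver the identification. So the work is to compute $\pi_\star(\mathrm{B}_{C_p}S^1)$ and show it is concentrated in degree $\lambda$ with value $\underline{\mathbb{Z}}$.

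First I would analyze the $C_p$-cell structure on $\mathbb{P}(\mathbb{C}[z])$ coming from the filtration by the finite-dimensional subspaces $\mathbb{P}_n := \mathbb{P}(\mathbb{C}\langle 1, z, \ldots, z^n\rangle)$. A direct open-chart computation identifies the successive quotient $\mathbb{P}_n/\mathbb{P}_{n-1}$ with the representation sphere $S^{W_n}$, where $W_n := \bigoplus_{k=1}^n \mathbb{C}_{-k}$ and $\mathbb{C}_j$ denotes the one-dimensional complex $C_p$-representation of weight $j$. In particular the bottom non-basepoint cell is $\mathbb{P}_1/\mathbb{P}_0 = S^\lambda$, and all higher cells have dimension at least $\lambda + 1$ (at every isotropy), exhibiting the required equivariant $(\lambda - 1)$-connectivity.

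Next I would compute the $\lambda$-homotopy Mackey functor. Its underlying value is $\pi_2(\mathbb{C}P^\infty) = \mathbb{Z}$. For the $C_p$-fixed value $\pi_\lambda^{C_p}$, I would use the splitting $(\mathrm{B}_{C_p}S^1)^{C_p} \simeq \bigsqcup_{k \in \mathbb{Z}/p} \mathbb{C}P^\infty$ coming from the weight decomposition of $\mathbb{C}[z]$, and observe that pointed classes in the identity component correspond to equivariant line bundles on $S^\lambda$ trivialized at $\infty$, which are detected by their ordinary first Chern integer. The Mackey functor structure (restriction the identity on $\mathbb{Z}$ and transfer multiplication by $p$) matches $\underline{\mathbb{Z}}$ and can be verified either cellularly or via the standard double-coset formula for equivariant bundles. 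Vanishing of higher homotopy is then immediate: the underlying space is $K(\mathbb{Z}, 2)$, and each component of the fixed-point space is also a $\mathbb{C}P^\infty = K(\mathbb{Z}, 2)$.

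I expect the main obstacle to be the Mackey structure computation in the previous paragraph, specifically pinning down the transfer map. The underlying and fixed point values $\mathbb{Z}$ are easy to extract, but making sure that the Mackey functor is $\underline{\mathbb{Z}}$ rather than, for example, its Weyl dual or a twisted form requires an explicit geometric model. Once this is in hand, the combination of the cellular connectivity argument, the computation of $\pi_\lambda$, and the higher-vanishing statement determines $\mathrm{B}_{C_p}S^1$ up to equivalence as a $C_p$-space by the Eilenberg--MacLane recognition theorem, completing the identification with $K(\underline{\mathbb{Z}}, \lambda)$.
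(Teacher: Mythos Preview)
Your approach is quite different from the paper's, which gives a three-line argument: the map $\mathbb{P}(\mathbb{C}[z]) \to \mathrm{SP}^{\infty}(S^{\lambda})$ sending a polynomial to its multiset of roots is an equivariant homeomorphism, the equivariant Dold--Thom theorem identifies the group completion of $\mathrm{SP}^{\infty}(S^{\lambda})$ with $K(\underline{\mathbb{Z}},\lambda)$, and $\mathrm{SP}^{\infty}(S^{\lambda})$ is already group-complete because $\pi_0$ of its fixed points is $\mathbb{Z}/p$. This bypasses all cell-by-cell and Mackey functor analysis.

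Your route, by contrast, has a genuine gap at the final ``recognition'' step. Knowing the homotopy Mackey functors of the connective delooping $X$ of $\mathrm{B}_{C_p}S^1$ does \emph{not} determine $X$: the spectrum $\Sigma^{\lambda}H\underline{\mathbb{Z}}$ has exactly two nonzero $\underline{\pi}_n$, in degrees $0$ and $2$, and the Postnikov $k$-invariant between them is nontrivial (if it vanished, $\Sigma^{\lambda}H\underline{\mathbb{Z}}$ would split as $\Sigma^2 H\underline{\mathbb{Z}}$ wedge a geometric piece, which one can rule out by comparing the $\mathbb{F}_p[b]$-module structure on geometric fixed points). So there are at least two non-equivalent connective $C_p$-spectra with those homotopy Mackey functors, and your appeal to an ``Eilenberg--MacLane recognition theorem'' does not distinguish them. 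A fix would be to actually \emph{construct} a comparison map, for instance via the tautological class in $H^{\lambda}_{C_p}(\mathrm{B}_{C_p}S^1;\underline{\mathbb{Z}})$, and then check it is an equivalence on underlying and fixed-point spaces; but that is extra work you have not sketched, and the Dold--Thom argument is both shorter and avoids the issue entirely.
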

\begin{proof} The map
	\[
	\mathbb{P}(\mathbb{C}[z]) \to \mathrm{SP}^{\infty}(S^{\lambda})
	\]
to the infinite symmetric product,
which sends a polynomial $f(z)$ to its set of roots (with multiplicity),
is an equivariant homeomorphism. The group-completion of
the latter is a model for $K(\underline{\mathbb{Z}},\lambda)$
by the equivariant Dold-Thom theorem. But 
$\mathrm{SP}^{\infty}(S^{\lambda})$ is already group-complete:
the monoid of connected components of the fixed points is
$\mathbb{N}/p = \mathbb{Z}/p$.
\end{proof}

\begin{remark} The reader may object that 
the definition of $\mathrm{B}_{C_p}S^1$ makes no reference
to $\lambda$, so how does $\mathrm{B}_{C_p}S^1$
know about this representation rather than $\lambda^k$ for 
some $k$ coprime to $p$? The answer is that, in fact,
each of the Eilenberg-MacLane spaces $K(\underline{\mathbb{Z}},
\lambda^k)$ coincide for such $k$: we have
an equivalence of $\underline{\mathbb{Z}}$-modules
	\[
	\Sigma^{\lambda}\underline{\mathbb{Z}} \simeq
	\Sigma^{\lambda^k}\underline{\mathbb{Z}}
	\]
whenever $(k,p)=1$. This follows from the computations
in \cite[Proposition 9.2]{ferland-lewis}, for example.
\end{remark}

The filtration of $\mathbb{C}[z]$ by the subspaces
$\mathbb{C}[z]_{\le n}$ of polynomials of degree at most $n$
gives a filtration of $\mathrm{B}_{C_p}S^1$.

\begin{lemma} There is a canonical equivalence
	\[
	\mathrm{gr}_k\mathrm{B}_{C_p}S^1
	\cong S^{V_k}.
	\]
where $V_k = \bigoplus_{0\le i \le k-1}\lambda^{i-k}$.
\end{lemma}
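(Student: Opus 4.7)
The plan is to analyze the filtration one step at a time, by identifying the cofiber $\mathbb{P}(\mathbb{C}[z]_{\le k})/\mathbb{P}(\mathbb{C}[z]_{\le k-1})$ with the one-point compactification of the complementary affine chart, namely the chart where the coefficient of $z^k$ is nonzero.

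First I would fix coordinates. The monomials $1, z, \ldots, z^k$ diagonalize the $C_p$-action: the generator multiplies $z^i$ by $\zeta^i$, where $\zeta = e^{2\pi i/p}$. Under the $p$-local conventions of the paper, this identifies $\mathbb{C}[z]_{\le k}$ with $\bigoplus_{i=0}^{k} \lambda^i$ as a $C_p$-representation, with $\mathbb{C}\{z^i\} \simeq \lambda^i$.

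Next I would apply the standard equivariant projective-space decomposition. For any $C_p$-representation $W$ and any one-dimensional $C_p$-representation $L$, the closed embedding $\mathbb{P}(W) \hookrightarrow \mathbb{P}(W \oplus L)$ has open complement given by those lines on which the $L$-coordinate is nonzero. Renormalizing the $L$-coordinate to $1$ identifies this chart, as a $C_p$-space, with $W \otimes L^{\vee}$: the line spanned by $(w, \ell)$ with $\ell \neq 0$ corresponds to $w \otimes \ell^{-1} \in W \otimes L^{\vee}$. One obtains the cofiber sequence of pointed $C_p$-spaces
\[
\mathbb{P}(W) \longrightarrow \mathbb{P}(W \oplus L) \longrightarrow S^{W \otimes L^{\vee}}.
\]
Specializing to $W = \mathbb{C}[z]_{\le k-1}$ and $L = \mathbb{C}\{z^k\} \simeq \lambda^k$ gives
\[
W \otimes L^{\vee} \simeq \Bigl( \bigoplus_{i=0}^{k-1} \lambda^i \Bigr) \otimes \lambda^{-k} \simeq \bigoplus_{i=0}^{k-1} \lambda^{i-k} = V_k,
\]
as desired.

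This argument is essentially bookkeeping, so the main obstacle is just care in tracking the $C_p$-action through the affine-chart identification --- in particular, verifying that renormalizing the $L$-coordinate introduces a twist by $L^{\vee}$ rather than by $L$, so that the exponents in the final expression come out as $i-k$ rather than $i+k$.
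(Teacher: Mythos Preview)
Your proof is correct and is essentially the same as the paper's: both identify the open complement $\mathbb{P}(W\oplus L)\setminus\mathbb{P}(W)$ equivariantly with $W\otimes L^{\vee}$ (the paper phrases this as the graph map $\mathrm{Hom}_{\mathbb{C}}(L,W)\to \mathbb{P}(W\oplus L)\setminus\mathbb{P}(W)$, which is the same identification), and then specialize to $W=\mathbb{C}[z]_{\le k-1}$, $L=\mathbb{C}\{z^k\}\simeq\lambda^k$.
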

\begin{proof} This follows from a more general observation.
If $L$ is a one-dimensional 
complex representation, and $V$ is an arbitrary
complex representation, then the function
assigning a linear map to its graph,
	\[
	\mathrm{Hom}_{\mathbb{C}}(L,V) \longrightarrow
	\mathbb{P}(V\oplus L) - \mathbb{P}(V), 
	\]
is an equivariant homeomorphism. So it induces
an equivalence on one-point compactifications
	\[
	S^{L^{\vee}\otimes V} \cong \mathbb{P}(V\oplus L)/\mathbb{P}(V).
	\]
\end{proof}

The next proposition now follows from \cite[Proposition 3.1]{lewis}.

\begin{proposition}[Lewis] 
The above filtration on $\mathrm{B}_{C_p}S^1$ splits
after tensoring with $\underline{\mathbb{Z}}$,
giving an equivalence
	\[
	\underline{\mathbb{Z}} \otimes \mathrm{B}_{C_p}S^1_+
	\simeq
	\underline{\mathbb{Z}}\{e_0, e_1, ...\}
	\]
where
	\[
	|e_k| = \bigoplus_{0\le i \le k-1} \lambda^{i-k}.
	\]
In particular, for $i\ge 1$ we have $|e_{p^i}|= 2p^{i-1}\rho_{C_p}$.
\end{proposition}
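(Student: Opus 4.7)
The plan is to apply Lewis's splitting result to the filtration constructed above, and then to verify the claimed formula for $|e_{p^i}|$ by a direct representation-theoretic calculation.

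First I would tensor the filtration $\mathrm{B}_{C_p}S^1_{\le 0} \subset \mathrm{B}_{C_p}S^1_{\le 1} \subset \cdots$ with $\underline{\mathbb{Z}}$. By the previous lemma, this produces a filtration of $\underline{\mathbb{Z}}$-modules whose $k$-th associated graded piece is $\Sigma^{V_k}\underline{\mathbb{Z}}$, where $V_k = \bigoplus_{0 \le i \le k-1}\lambda^{i-k}$. To split this filtration, one inducts on filtration degree: assuming $\underline{\mathbb{Z}} \otimes \mathrm{B}_{C_p}S^1_{\le k-1,+} \simeq \bigoplus_{j=0}^{k-1}\Sigma^{V_j}\underline{\mathbb{Z}}$, it suffices to show the attaching map $\Sigma^{V_k - 1}\underline{\mathbb{Z}} \to \underline{\mathbb{Z}} \otimes \mathrm{B}_{C_p}S^1_{\le k-1,+}$ is null, which in turn reduces to the vanishing of
\[
\pi_{V_k - V_j - 1}(\underline{\mathbb{Z}}) \quad \text{for } 0 \le j < k.
\]
This $RO(C_p)$-graded vanishing is exactly the content of \cite[Proposition 3.1]{lewis}, so the splitting follows by direct citation.

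Once the splitting is in hand, it remains to identify $|e_{p^i}|$ with $2p^{i-1}\rho_{C_p}$ $p$-locally. This is a book-keeping exercise: we have
\[
V_{p^i} = \bigoplus_{j=0}^{p^i - 1} \lambda^{j - p^i},
\]
and grouping summands by the residue $r = j \bmod p$, each of the $p$ residue classes contributes $p^{i-1}$ summands. For $r = 0$ these are trivial $2$-dimensional real representations, contributing $2p^{i-1}$ trivial real summands. For $r \in \{1,\dots,p-1\}$, each summand is $\lambda^r$, which is $p$-locally equivalent to $\lambda$ by the convention recorded in the introduction. Assembling gives $2p^{i-1} + p^{i-1}(p-1)\lambda$ $p$-locally, which agrees with $2p^{i-1}\rho_{C_p}$ since $\rho_{C_p} \simeq_{(p)} 1 + \tfrac{p-1}{2}\lambda$.

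The main obstacle is Lewis's vanishing computation, which is the substantive step and is imported wholesale from \cite{lewis}. Without that input one would have to carry out the relevant $RO(C_p)$-graded homotopy calculation of $\underline{\mathbb{Z}}$ by hand, using the slice filtration or a direct cellular model for $\underline{\mathbb{Z}}$; everything else in the argument is either formal (the inductive splitting) or a straightforward representation-theoretic accounting ($p$-local identification of $V_{p^i}$).
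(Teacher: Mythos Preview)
Your proposal is correct and takes the same approach as the paper: the paper's entire proof is the sentence ``The next proposition now follows from \cite[Proposition 3.1]{lewis},'' and you are simply unpacking what that citation says and then carrying out the $p$-local identification $V_{p^i}\simeq 2p^{i-1}\rho_{C_p}$ that the paper leaves implicit. Your inductive splitting argument and the residue-class bookkeeping are both accurate elaborations of this.
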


We will also need some information about the multiplicative
structure on homology.

\begin{lemma}\label{lem:mult-in-bs1}
Writing $x\doteq y$ to mean that $x=\alpha y$
for some $\alpha \in \mathbb{Z}_{(p)}^{\times}$, we have
	\[
	e_{1}^p \doteq \theta e_p, \text{ and } e_{p^i}^p \doteq
	pe_{p^{i+1}}\text{ for }i\ge1.
	\]
\end{lemma}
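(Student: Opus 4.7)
The plan is to exploit the $H$-space multiplication on $B_{C_p}S^1 = \mathbb{P}(\mathbb{C}[z])$ arising from polynomial multiplication $(f,g) \mapsto fg$. This map preserves the degree filtration and adds degrees, so after smashing with $\underline{\mathbb{Z}}$ and using Lewis's splitting, the Pontryagin power $e_{p^i}^p$ factors through the $p^{i+1}$-st filtration stage. The leading contribution, corresponding to multiplying $p$ monic polynomials of degree exactly $p^i$, gives an equivariant map of top cells $S^{V_{p^i}} \otimes \cdots \otimes S^{V_{p^i}} \to S^{V_{p^{i+1}}}$, and after smashing with $\underline{\mathbb{Z}}$ this computes the coefficient of $e_{p^{i+1}}$ in the expansion of $e_{p^i}^p$.

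For $i \geq 1$, a direct character calculation shows that $pV_{p^i} = V_{p^{i+1}}$ in $RO(C_p)$: both real representations contain $p^i$ copies of each character $\lambda^k$ for $k = 0, 1, \ldots, p-1$. Hence the coefficient lies in $\pi_0^{C_p}\underline{\mathbb{Z}}_{(p)} = \mathbb{Z}_{(p)}$. Restricting to the underlying spectrum, where the restriction map on the constant Mackey functor $\underline{\mathbb{Z}}$ is the identity $\mathbb{Z} \to \mathbb{Z}$, I recover the classical divided power identity in $H_*(\mathbb{C}P^\infty) \cong \Gamma_{\mathbb{Z}}\{\beta_1\}$, namely $\beta_{p^i}^p = \frac{(p^{i+1})!}{(p^i!)^p}\beta_{p^{i+1}}$, and the coefficient has $p$-adic valuation exactly one. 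Thus the equivariant coefficient is $\doteq p$, giving $e_{p^i}^p \doteq p \cdot e_{p^{i+1}}$.

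For $i = 0$, the real representations $pV_1 \simeq p\lambda$ and $V_p \simeq 2\rho_{C_p}$ differ by $\lambda - 2$ after $p$-localization, so the coefficient lies in $\pi_{\lambda - 2}^{C_p}\underline{\mathbb{Z}}_{(p)}$. This group is $\mathbb{Z}_{(p)}$ generated by $\theta$, with restriction to the underlying sending $\theta \mapsto p$. Since the underlying coefficient equals $p! \doteq p$ by the classical formula $\beta_1^p = p!\beta_p$, the equivariant coefficient must be $\doteq \theta$, yielding $e_1^p \doteq \theta e_p$.

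The most delicate part of the argument is ruling out spurious contributions to $e_{p^i}^p$ from the other summands $\underline{\mathbb{Z}}\{e_k\}$ with $k \neq p^{i+1}$ in Lewis's splitting. The filtration argument immediately eliminates $k > p^{i+1}$. For $k < p^{i+1}$, one needs a connectivity argument showing that $\pi_{pV_{p^i} - V_k}^{C_p}\underline{\mathbb{Z}}_{(p)}$ vanishes; this should follow because the virtual representation $pV_{p^i} - V_k$ has strictly positive underlying dimension and sits outside the support of the $RO(C_p)$-graded homotopy of $\underline{\mathbb{Z}}_{(p)}$. This connectivity step, together with the identification of $\pi_{\lambda - 2}^{C_p}\underline{\mathbb{Z}}_{(p)}$ used in the $i = 0$ case, is the main obstacle.
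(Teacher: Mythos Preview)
Your approach is the same as the paper's: exploit the filtered-monoid structure on $\mathbb{P}(\mathbb{C}[z])$ to write $e_{p^i}^p = \sum_{j \le p^{i+1}} c_{i,j}\, e_j$, kill the $j < p^{i+1}$ terms by vanishing of $\pi_{\star}\underline{\mathbb{Z}}_{(p)}$, and identify the top coefficient via injectivity of restriction together with the classical divided-power relation in $H_*(\mathbb{C}P^{\infty})$. For the obstacle you flag, the paper notes that for $j < p^{i+1}$ the virtual representation $pV_{p^i} - V_j$ has positive virtual dimension \emph{and} positive fixed-point virtual dimension (not just the former), which lands it in the vanishing region for $\pi_{\star}\underline{\mathbb{Z}}$ recorded in Ferland--Lewis.
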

\begin{proof} Using the model for $\mathrm{B}_{C_p}S^1$
given by $\mathbb{P}(\mathbb{C}[z])$, we see that,
in fact, $\mathbb{P}(\mathbb{C}[z])$ has the structure
of a filtered monoid. It follows that the product in
homology respects the filtration by the classes
$\{e_i\}$. Thus, for $i\ge 0$, we have:
	\[
	e_{p^i}^p = \sum_{j\le p^{i+1}} c_{i,j}e_j
	\]
where the coefficients lie in $\pi_{\star}\underline{\mathbb{Z}}$.
When $j<p^{i+1}$ we see that the virtual representations
$|c_{i,j}|$ have positive
virtual dimension and their fixed points also have 
positive virtual dimension. The homotopy of $\underline{\mathbb{Z}}$
vanishes in these degrees (see, e.g., \cite[Theorem 8.1(iv)]{ferland-lewis}),
so we must have
	\[
	e^p_{p^i} = c_{i, p^{i+1}}e_{p^{i+1}}
	\]
where $|c_{0, p}| = \lambda -2$ and
$|c_{i, p^{i+1}}| = 0$ when $i\ge 1$. In both cases,
the restriction map on $\pi_{\star}\underline{\mathbb{Z}}$
is injective in this degree, so the result follows
from the nonequivariant calculation.
\end{proof}

\section{Suspending classes}\label{sec:homology-suspend}

We begin with some generalities. If $X$ is any $C_p$-spectrum,
we have the counit
	\[
	\Sigma^{\infty}_+\Omega^{\infty} X \to X
	\]
which induces a map
	\[
	\sigma: \underline{\mathbb{Z}} \otimes 
	\Sigma^{\infty}_+\Omega^{\infty}X \to \underline{\mathbb{Z}} \otimes X,
	\]
called the homology suspension.
Just as in the classical case, $\sigma$ annihilates 
decomposable elements in $\pi_*(\underline{\mathbb{Z}} \otimes
\Sigma^{\infty}_+\Omega^{\infty}X)$.

\begin{construction} For $i\ge 1$, we define
	\[
	t_i: S^{2p^{i-1}\rho_{C_p}-\lambda} \to \underline{\mathbb{Z}} \otimes
	\underline{\mathbb{Z}}
	\]
as the homology suspension of the element
$e_{p^i} \in \pi_{2p^{i-1}\rho_{C_p}}(\underline{\mathbb{Z}}\otimes
\mathrm{B}_{C_p}S^1)$. 
Here we use the identification
	\[
	\mathrm{B}_{C_p}S^1 \simeq K(\underline{\mathbb{Z}},\lambda)
	= \Omega^{\infty}\Sigma^{\lambda}\underline{\mathbb{Z}}.
	\]
\end{construction}

\section{Two relations in homology}\label{sec:relations}

We begin with a brief review of norms, transfers, and restrictions.

\begin{remark}[Transfer and restriction]
Given a
nonequivariant equivalence $(S^V)^e \cong S^n$, we
define
	\[
	\mathrm{res}: \pi_VX \to \pi_nX^e, \quad
	(x: S^V \to  X) \mapsto (S^n \cong (S^V)^e \to X)
	\]
and
	\[
	\mathrm{tr}_V: \pi_nX^e \to \pi_VX, \quad
	(y: S^n \to X^e) \mapsto
	(S^V \to C_{p+} \otimes S^V \cong C_{p+} \otimes S^n
	\to C_{p+} \otimes X \to X).
	\]
For example, when $V=\lambda-2$ and
$X = S^0$, then $\mathrm{tr}_{\lambda -2}(1)
=\theta$.

Changing the equivalence $(S^V)^e \cong S^n$ has the
effect of altering these classes by $\pm 1$; in our case
the representations in question have canonical orientations
so this will not be a concern. Given a map $X \otimes Y \to Y$
we have a relation:
	\[
	\mathrm{tr}(x \otimes \mathrm{res}(y)) = \mathrm{tr}(x) \otimes y.
	\]
\end{remark}

\begin{remark}[Norms]
If a $C_p$-spectrum $X$ has a map $N(X) \to X$,
then, given an underlying class $x: S^n \to X^e$, we may define
a \textbf{norm} by the composite
	\[
	Nx: N(S^n) = S^{n\rho_{C_p}} \to N(X) \to X.
	\]
The underlying nonequivariant class is given by
$\mathrm{res}(Nx) = \prod_{g \in C_p} (gx) \in \pi_{pn}X^e$.
\end{remark}

Our goal in this section is to prove the following two lemmas.

\begin{lemma}\label{lem:thetat=0} The classes 
$t_i \in \pi_{2p^{i-1}\rho_{C_p}-\lambda}(\underline{\mathbb{Z}}_{(p)}
\otimes \underline{\mathbb{Z}}_{(p)})$
satisfy $\theta t_i = 0$.
\end{lemma}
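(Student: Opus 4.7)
The plan is to use the homology suspension
\[
\sigma: \pi_\star(\underline{\mathbb{Z}} \otimes \mathrm{B}_{C_p}S^1_+) \to \pi_{\star - \lambda}(\underline{\mathbb{Z}} \otimes \underline{\mathbb{Z}}),
\]
which is $\pi_\star\underline{\mathbb{Z}}$-linear (being induced by a map of spectra) and annihilates classes decomposable with respect to the Pontryagin product on $\pi_\star(\underline{\mathbb{Z}} \otimes \mathrm{B}_{C_p}S^1_+)$. Since $t_i = \sigma(e_{p^i})$, these two properties together give $\theta t_i = \sigma(\theta e_{p^i})$, so it suffices to exhibit $\theta e_{p^i}$ as a decomposable element, for which the multiplicative relations of Lemma~\ref{lem:mult-in-bs1} are the natural tool.

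When $i = 1$, Lemma~\ref{lem:mult-in-bs1} directly gives $\theta e_p \doteq e_1^p$, a $p$-fold product of the augmentation-ideal class $e_1$, which is decomposable by inspection.

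When $i \geq 2$, Lemma~\ref{lem:mult-in-bs1} only supplies $p e_{p^i} \doteq e_{p^{i-1}}^p$, so I would bridge the gap between the scalar $p$ and the $RO(C_p)$-graded class $\theta$ by invoking the identity $\theta = p\, u_\lambda$ in $\pi_{\lambda-2}\underline{\mathbb{Z}}$, where $u_\lambda$ is the Thom class of the complex representation $\lambda$. This identity is forced by the fact that $\pi_{\lambda-2}\underline{\mathbb{Z}} \cong \mathbb{Z}$ is detected injectively by restriction to $\pi_0^e\underline{\mathbb{Z}} = \mathbb{Z}$, together with $\mathrm{res}(\theta) = p$ (the degree of the cover $S^\lambda \to S^2$) and $\mathrm{res}(u_\lambda) = 1$. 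Consequently
\[
\theta e_{p^i} = u_\lambda \cdot (p e_{p^i}) \doteq u_\lambda \cdot e_{p^{i-1}}^p,
\]
which is a scalar multiple of the $p$-th power of the augmentation-ideal class $e_{p^{i-1}}$, and is therefore decomposable; then $\sigma$ kills it.

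The main (mild) obstacle is the identification $\theta = p\, u_\lambda$: while morally clear from the restriction computation, a careful justification requires knowing the structure of $\pi_{\lambda-2}\underline{\mathbb{Z}}$, which one extracts from \cite{ferland-lewis}. Everything else is formal manipulation of the Pontryagin product and the $\pi_\star\underline{\mathbb{Z}}$-linearity of $\sigma$.
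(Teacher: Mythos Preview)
Your argument for $i=1$ is correct and matches the paper. The gap is in the step for $i\ge 2$: the identity $\theta = p\,u_\lambda$ in $\pi_{\lambda-2}\underline{\mathbb{Z}}$ is false. The orientation class $u_\lambda$ lives in $\pi_{2-\lambda}\underline{\mathbb{Z}}$, not $\pi_{\lambda-2}\underline{\mathbb{Z}}$; the correct relation is $u_\lambda\cdot\theta = p$ in $\pi_0\underline{\mathbb{Z}}$, which goes the wrong way for your purposes. In fact $\pi_{\lambda-2}\underline{\mathbb{Z}}\cong\mathbb{Z}$ with restriction to $\pi_0^e\underline{\mathbb{Z}}=\mathbb{Z}$ given by multiplication by $p$ (one sees this from the long exact sequence for $S(\lambda)_+\to S^0\to S^\lambda$, identifying restriction with pullback along the $p$-fold cover $S(\lambda)\to S(\lambda)/C_p$). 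Thus $\theta$ is already a \emph{generator} of $\pi_{\lambda-2}\underline{\mathbb{Z}}$ and is not divisible by $p$; there is no class in that degree restricting to $1$. Consequently you cannot write $\theta e_{p^i}$ as a $\pi_\star\underline{\mathbb{Z}}$-multiple of $e_{p^{i-1}}^p$: the only candidate coefficient degree is $\lambda-2$, generated by $\theta$, and $\theta e_{p^{i-1}}^p \doteq p\,\theta e_{p^i}$, which is $p$ times what you want rather than a unit times it.

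This is exactly why the paper does something more elaborate for $i\ge 2$. Instead of trying to exhibit $\theta e_{p^i}$ as decomposable in the homology of $K(\underline{\mathbb{Z}},\lambda)$, it passes along the map $[\theta]\colon K(\underline{\mathbb{Z}},\lambda)\to K(\underline{\mathbb{Z}},2)$ and shows that $[\theta]_*e_{p^i}$ is decomposable there. The point is that $[\theta]_*e_{p^i}$ lands in integer degree and equals $(\text{coefficient})\cdot\beta_{(i)}$ where the coefficient carries an honest factor of $p^{i-1}$; this factor absorbs the $1/p$ in $\beta_{(i)}\doteq \tfrac{1}{p}\beta_{(i-1)}^p$ precisely when $i\ge 2$. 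The passage to $K(\underline{\mathbb{Z}},2)$ is what manufactures the needed $p$-divisibility that is absent from $\theta$ itself.
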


\begin{lemma}\label{lem:pn=0} The classes $N(t_i)\in
\pi_{(2p^i-2)\rho_{C_p}}(\underline{\mathbb{Z}}_{(p)}
\otimes \underline{\mathbb{Z}}_{(p)})$
 satisfy $pN(t_i)=0$.
\end{lemma}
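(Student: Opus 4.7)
The plan is to derive $pN(t_i)=0$ from $\theta t_i = 0$ (Lemma~\ref{lem:thetat=0}) by applying Frobenius reciprocity twice, exploiting the fact that both $p$ and $\theta$ arise as transfers of the unit, albeit in different $RO(C_p)$-gradings.

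First, inside $R:=\underline{\mathbb{Z}}_{(p)}\otimes\underline{\mathbb{Z}}_{(p)}$ the class $p\in\pi_0 R$ equals $\mathrm{tr}_0(1)$, because the unit $S^0\to \underline{\mathbb{Z}}$ sends $[C_p/e]\in\pi_0 S^0$ to $p$ (the transfer in the constant Mackey functor is multiplication by $|C_p|$). So Frobenius reciprocity rewrites
\[
p\cdot N(t_i) \;=\; \mathrm{tr}_0(1)\cdot N(t_i) \;=\; \mathrm{tr}\bigl(\mathrm{res}(N(t_i))\bigr).
\]
The $C_p$-action on $\pi_*(\mathbb{Z}\otimes\mathbb{Z})$ is trivial, again because $\underline{\mathbb{Z}}$ is the constant Mackey functor, so the general formula $\mathrm{res}(Nx)=\prod_{g} g\cdot x$ collapses to $\mathrm{res}(t_i)^p = \mathrm{res}(t_i^p)$, where $t_i^p$ denotes the $p$-fold product of the equivariant class $t_i$ in $\pi_{\star} R$.

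Applying Frobenius a second time, now to pull the equivariant class $t_i^p$ back out of the transfer, gives
\[
\mathrm{tr}\bigl(\mathrm{res}(t_i^p)\bigr) \;=\; \mathrm{tr}_V(1)\cdot t_i^p
\]
with indexing representation $V = (2p^i-2)\rho_{C_p} - |t_i^p|$. Using $\rho_{C_p} = 1 + \tfrac{p-1}{2}\lambda$ ($p$-locally), a direct $RO(C_p)$-computation collapses this to $V = \lambda-2$, and hence $\mathrm{tr}_V(1)=\theta$. We conclude
\[
p\cdot N(t_i) \;=\; \theta\cdot t_i^p \;=\; (\theta t_i)\cdot t_i^{p-1} \;=\; 0
\]
by Lemma~\ref{lem:thetat=0}. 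The step that demands any real care is the $RO(C_p)$-arithmetic landing $V$ exactly on $\lambda-2$, so that the emerging transfer class is precisely $\theta$ rather than some nearby Euler or transfer element; everything else is formal Mackey-functor manipulation, using only the $\underline{\mathbb{Z}}$-algebra structure on $R$ and the triviality of the underlying $C_p$-action on $\pi_*(\mathbb{Z}\otimes\mathbb{Z})$.
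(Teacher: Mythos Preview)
Your proof is correct and follows essentially the same route as the paper: rewrite $p=\mathrm{tr}(1)$, apply Frobenius reciprocity to get $pN(t_i)=\mathrm{tr}(\mathrm{res}(t_i)^p)$, and then pull $t_i^p$ back out to identify the remaining transfer as $\theta$ via the $RO(C_p)$-degree count $(2p^i-2)\rho_{C_p}-|t_i^p|=\lambda-2$. You are slightly more explicit than the paper in invoking the triviality of the $C_p$-action on $\pi_*(\mathbb{Z}_{(p)}\otimes\mathbb{Z}_{(p)})$ to collapse $\prod_g g\cdot\mathrm{res}(t_i)$ to $\mathrm{res}(t_i)^p$, but otherwise the arguments coincide.
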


In fact, the second relation follows from the first.

\begin{proof}[Proof of Lemma \ref{lem:pn=0}
assuming Lemma \ref{lem:thetat=0}]
Since $p = \mathrm{tr}(1)$, the class $pN(t_i)$ is
the transfer of the class $\mathrm{res}(t_i)^p$
into degree $(2p^i-2)\rho_{C_p}$.
Notice that $(2p^i-2)\rho_{C_p} - |t_i^p| = \lambda-2$
(after identifying the $\lambda^k$ suspensions with
$\lambda$ for $(k,p)=1$),
and the transfer of $1$ into this degree is $\theta$, so we have
	\[
	pN(t_i) = \theta t_i^p = 0.
	\]
\end{proof}

\begin{proof}[Proof of Lemma \ref{lem:thetat=0}]
By Lemma \ref{lem:mult-in-bs1}, 
we have $e_1^p \doteq \theta e_p$ so that
$\theta t_1 = \sigma(\theta e_p) = 0$, since
$\sigma$ annihilates decomposables. For the remaining classes,
consider the commutative diagram
	\[
	\xymatrix{
	K(\underline{\mathbb{Z}}, \lambda)_+\ar[d]_{[\theta]} \ar[r] & 
	\Sigma^{\lambda}\underline{\mathbb{Z}} \ar[d]^{\theta}\\
	K(\underline{\mathbb{Z}}, 2)_+ \ar[r] &
	\Sigma^2\underline{\mathbb{Z}}
	}
	\]
where $[\theta] = \Omega^{\infty}(\theta)$. Thus, to show that
$\theta t_i = 0$ for $i\ge 2$, it is enough to show that
$[\theta]_*e_{p^i}$ is decomposable
in $\pi_{\star}(\underline{\mathbb{Z}}_{(p)} \otimes K(\underline{\mathbb{Z}},2)_+)$ for $i\ge 2$.

Write
	\[
	\underline{\mathbb{Z}}_{(p)} \otimes
	K(\underline{\mathbb{Z}}, 2)_+
	= 
	\underline{\mathbb{Z}}_{(p)}\{\gamma_i(\beta_1)\}
	\]
where the elements $\gamma_i(\beta_1)$ are the standard module
generators of $H_*(\mathbb{C}P^{\infty}; \mathbb{Z})$,
and write $\beta_{(i)} = \gamma_{p^i}\beta_1$.
To show that $[\theta]_*(e_{p^i})$ is decomposable
for $i\ge 2$, it is enough to establish the following
two claims:
	\begin{enumerate}[(a)]
	\item $[\theta]_*(e_{p^i}) \doteq 
	\frac{p^{i-1}\theta}{u_{\lambda}^{p^i(p-1)-1}}\beta_{(i)}$, and
	\item $\beta_{(i-1)}^p \doteq p\beta_{(i)}$.
	\end{enumerate}
Claim (b) is just the classical computation of the product
in homology for $H_*(\mathbb{C}P^{\infty}, \mathbb{Z})$.
For claim (a), let $\iota_{\lambda}$ denote the fundamental
class in cohomology for $K(\underline{\mathbb{Z}}, \lambda)$
and $\iota_2$ the same for $K(\underline{\mathbb{Z}}, 2)$.
Then we have $[\theta]^*(\iota_2) = \theta \iota_{\lambda}$
by design, and hence
	\[
	[\theta]^*(\iota_2^j) = \theta^j \iota_{\lambda}^j. 
	\]
The map on homology is now determined by the relation
	\[
	\langle [\theta]_*e_{p^i}, \iota_2^j\rangle
	= \theta^j\langle e_{p^i}, \iota^j_{\lambda}\rangle
	\in \pi_{\star}\underline{\mathbb{Z}}_{(p)}.
	\]
Since $\theta^j$ is a transferred class, the value above
is also a transfer, and hence determined by its restriction
to an underlying class. But $\mathrm{res}([\theta]) = [p]$
and we clearly have $[p]_*(\mathrm{res}(e_{p^i})) =p^i\beta_{(i)}$,
which agrees with the restriction of 
$\frac{p^{i-1}\theta}{u_{\lambda}^{p^i(p-1)-1}}\beta_{(i)}$. 
This completes the proof.
\end{proof}

\section{Digression: Detecting equivalences nonequivariantly}\label{sec:detect-equivalences}

The goal of this section is to establish a criterion
for detecting equivalences of $\underline{\mathbb{Z}}$-modules.
We recall that
	\[
	\underline{\mathbb{Z}}^{\Phi C_p} \simeq
	\mathbb{F}_p[b]
	\]
where the class $b$ in degree 2 arises from taking the geometric
fixed points of the Thom class $u_{\lambda}: S^{\lambda} \to \Sigma^2\underline{\mathbb{Z}}$.

\begin{proposition}\label{prop:detect-equiv} Let $f: M \to N$ be a map
of $\underline{\mathbb{Z}}$-modules which
are bounded below. Assume the 
following conditions are satisfied:
	\begin{enumerate}[{\rm (i)}]
	\item $f$ is an underlying equivalence.
	\item $\pi_jM^{\Phi C_p}$ and $\pi_jN^{\Phi C_p}$
	are finite dimensional of the same rank, for all $j$.
	\item $\pi_*M^{\Phi C_p}$ and $\pi_*N^{\Phi C_p}$
	are graded-free $\mathbb{F}_p[b]$-modules.
	\end{enumerate}
Then $f$ is an equivalence.
\end{proposition}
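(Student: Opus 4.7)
The plan is to show the fiber $F$ of $f$ is contractible. Since $M$ and $N$ are bounded below, so is $F$; by hypothesis (i), $F^e \simeq 0$; and since $(-)^e$ and $(-)^{\Phi C_p}$ jointly detect equivalences of $C_p$-spectra, it suffices to prove $F^{\Phi C_p} \simeq 0$. The $\underline{\mathbb{Z}}$-module structure on $F$ makes $F^{\Phi C_p}$ a module over $\underline{\mathbb{Z}}^{\Phi C_p} = \mathbb{F}_p[b]$.

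The key observation is that $F^{\Phi C_p}$ is $b$-nilpotent. Indeed, the Tate construction $(-)^{tC_p}$ depends only on the underlying spectrum with its $C_p$-action, so $F^{tC_p} \simeq 0$. For any bounded below $\underline{\mathbb{Z}}$-module $X$, the comparison map $X^{\Phi C_p} \to X^{tC_p}$ is the localization inverting $b$, a standard fact verified first on $X = \underline{\mathbb{Z}}$ (where both sides are explicit: $\mathbb{F}_p[b]$ versus $\mathbb{F}_p[b^{\pm 1}]$) and then extended by a cellular argument. Applied to $F$, this gives $F^{\Phi C_p}[b^{-1}] \simeq 0$, so every element of $\pi_\ast F^{\Phi C_p}$ is annihilated by a power of $b$.

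Now consider the long exact sequence of $\mathbb{F}_p[b]$-modules arising from $F \to M \to N$ after applying $\Phi^{C_p}$:
\[
\cdots \to \pi_j F^{\Phi C_p} \xrightarrow{\alpha_j} \pi_j M^{\Phi C_p} \xrightarrow{f_*} \pi_j N^{\Phi C_p} \to \pi_{j-1} F^{\Phi C_p} \to \cdots.
\]
By (iii) the element $b$ acts injectively on $\pi_\ast M^{\Phi C_p}$, while every element of $\pi_\ast F^{\Phi C_p}$ is $b$-nilpotent; since $\alpha_j$ is $\mathbb{F}_p[b]$-linear this forces $\alpha_j = 0$ in each degree. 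The sequence therefore breaks into short exact sequences
\[
0 \to \pi_j M^{\Phi C_p} \xrightarrow{f_*} \pi_j N^{\Phi C_p} \to \pi_{j-1} F^{\Phi C_p} \to 0,
\]
and by (ii) the first two terms are finite dimensional of equal rank, forcing $f_*$ to be an isomorphism and $\pi_{j-1} F^{\Phi C_p}$ to vanish for every $j$. Hence $F^{\Phi C_p} \simeq 0$, as desired. The main obstacle is the $b$-localization statement used to establish $b$-nilpotence of $F^{\Phi C_p}$; once that standard fact is in hand, the remaining argument is a purely algebraic diagram chase using only hypotheses (ii) and (iii).
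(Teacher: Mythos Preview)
Your overall strategy coincides with the paper's: both arguments reduce to comparing geometric fixed points with Tate fixed points, and your repackaging via the fiber $F$ and the long exact sequence is essentially equivalent to the paper's commutative-square argument. The algebraic endgame (using (iii) to kill $b$-nilpotent classes mapping into a free module, then (ii) to count dimensions) is the same idea in slightly different clothing.

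The gap is in your ``key observation.'' You assert that for any bounded below $\underline{\mathbb{Z}}$-module $X$ the map $X^{\Phi C_p}\to X^{tC_p}$ becomes an equivalence after inverting $b$, and that this follows by a cellular argument from the case $X=\underline{\mathbb{Z}}$. But a cellular argument requires both functors to commute with the filtered colimit assembling $X$ from its finite skeleta; while $(-)^{\Phi C_p}[b^{-1}]$ does, the Tate construction $(-)^{tC_p}$ does \emph{not} commute with infinite coproducts or filtered colimits in general. The paper's Proposition~\ref{prop:tate-from-geo} accordingly proves the equivalence only for modules bounded above \emph{and} below, and does so by a nontrivial reduction through Postnikov layers, the filtration of $\mathbb{F}_p[C_p]$-modules by powers of $(1-\gamma)$, and the transfer. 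For merely bounded below $X$ (which is all you know about $F$), the paper then runs a separate Postnikov-tower argument---using that $(-)^{tC_p}$ commutes with Postnikov \emph{limits}---to establish only that $\pi_*X^{\Phi C_p}[b^{-1}]\to \pi_*X^{tC_p}$ is \emph{injective}. That weaker statement is exactly what you need (it yields $b$-nilpotence of $\pi_*F^{\Phi C_p}$ from $F^{tC_p}\simeq 0$), so your proof structure survives; but the ``standard fact'' you invoke is neither as strong nor as cheap as you suggest, and the paper's two-step argument is the actual content here.
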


We will deduce this proposition from the following one,
which relates geometric and Tate fixed points.

\begin{proposition}\label{prop:tate-from-geo} Let $M$ be a
$\underline{\mathbb{Z}}$-module which is both
bounded above and below. Then the natural map
	\[
	M^{\Phi C_p}[b^{-1}] \to M^{tC_p}
	\]
is an equivalence.
\end{proposition}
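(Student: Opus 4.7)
The plan is to use devissage and exactness of both sides to reduce the problem to the single case $M = \underline{\mathbb{Z}}$, where the equivalence can be checked by a direct computation.

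Both $M \mapsto M^{\Phi C_p}[b^{-1}]$ and $M \mapsto M^{tC_p}$ are exact functors (they preserve cofiber sequences), and the comparison is a natural transformation between them, so its cofiber $F(M)$ is itself an exact functor of $M$. The class of $\underline{\mathbb{Z}}$-modules for which $F(M) \simeq 0$ is therefore thick. Since $M$ is bounded above and below, its Postnikov tower has only finitely many nontrivial stages, each a shift of $H\underline{A}_n$ for a $\underline{\mathbb{Z}}$-module Mackey functor $\underline{A}_n$, so by exactness it suffices to show $F(H\underline{A}) \simeq 0$ for every $\underline{\mathbb{Z}}$-module Mackey functor $\underline{A}$. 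Every such Mackey functor is built, via finite extensions and filtered colimits, from the two projective generators $\underline{\mathbb{Z}}$ (the constant Mackey functor) and $\underline{\mathbb{Z}}[C_p/e]$ (the free Mackey functor on the free orbit), whose Eilenberg-MacLane spectra are $\underline{\mathbb{Z}}$ and $C_{p+} \otimes \underline{\mathbb{Z}}$, respectively.

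The induced case is immediate: both $(-)^{\Phi C_p}$ and $(-)^{tC_p}$ annihilate any $C_p$-spectrum induced from the trivial subgroup, since free $C_p$-orbits have empty $C_p$-fixed points and their Tate construction vanishes. For $M = \underline{\mathbb{Z}}$, combine the given identification $\underline{\mathbb{Z}}^{\Phi C_p} \simeq \mathbb{F}_p[b]$ with the standard computation $\underline{\mathbb{Z}}^{hC_p} \simeq \mathbb{Z}[c]/(pc)$ of Borel cohomology (with $|c|=-2$); inverting the Euler class $c$ yields $\underline{\mathbb{Z}}^{tC_p} \simeq \mathbb{F}_p[b, b^{-1}]$ (identifying $b = c^{-1}$), and the comparison map becomes the tautological localization $\mathbb{F}_p[b] \to \mathbb{F}_p[b, b^{-1}]$, which is visibly an equivalence after inverting $b$ on the source.

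The main obstacle is the filtered-colimit step in the devissage, since $(-)^{tC_p}$ is not continuous in general. One has to justify that, for Eilenberg-MacLane spectra with uniformly bounded Mackey-functor argument, the Tate construction commutes with filtered colimits in the argument $\underline{A}$. This is handled by the Tate spectral sequence, which is concentrated in a bounded strip of bidegrees and whose $E_2$-page depends functorially on $\underline{A}$, so filtered colimits pass through the spectral sequence. With this continuity in place, the devissage reduces the general bounded case to the two generators verified above.
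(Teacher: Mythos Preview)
Your overall strategy (reduce to generators by d\'evissage, verify on $\underline{\mathbb{Z}}$ and on induced modules) is natural, but the d\'evissage step has a genuine gap.  The claim that ``every $\underline{\mathbb{Z}}$-module Mackey functor is built, via finite extensions and filtered colimits, from $\underline{\mathbb{Z}}$ and $\underline{\mathbb{Z}}[C_p/e]$'' is false if ``extension'' is taken in the abelian category: the class of Mackey functors whose values are torsion-free abelian groups is closed under extensions and filtered colimits and contains both projective generators, yet $\underline{\mathbb{F}}_p$ is not in it.  If instead you mean cofiber sequences in the stable category, then you can certainly build $H\underline{A}$ from the compact generators, but the intermediate stages (brutal truncations of a projective resolution) have homotopy in degrees $0$ and $k$ with $k\to\infty$, so they are \emph{not} uniformly bounded, and your ``bounded strip'' Tate spectral sequence argument no longer applies to that colimit.

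The paper's proof sidesteps exactly this difficulty.  It first observes that both sides are $\mathbb{F}_p$-modules, so $M\in\mathcal{E}$ iff $M/p\in\mathcal{E}$; after reducing mod $p$ and passing to a single Postnikov layer, the underlying module is an $\mathbb{F}_p[C_p]=\mathbb{F}_p[\gamma]/(1-\gamma)^p$-module, which has a \emph{finite} filtration by powers of $(1-\gamma)$ with trivial associated graded.  This keeps the entire d\'evissage inside a finite chain of cofiber sequences of Eilenberg--MacLane spectra, so no problematic filtered colimits arise.  The one place an infinite direct sum is needed (passing from $\underline{\mathbb{F}}_p$ to $\underline{V}$ for an arbitrary $\mathbb{F}_p$-vector space $V$) is handled by direct computation on each side rather than by a general continuity argument.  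If you want to repair your approach, the cleanest fix is to insert the mod $p$ reduction at the start; once everything is an $\underline{\mathbb{F}}_p$-module, the d\'evissage you want becomes finite.
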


\begin{proof}[Proof of Proposition \ref{prop:detect-equiv}
assuming Proposition \ref{prop:tate-from-geo}]
By assumption (i), it is enough to check that
$f^{\Phi C_p}$ is an equivalence; by assumption
(ii), it is enough to check that $\pi_*(f^{\Phi C_p})$ is
an injection; and by assumption (iii) it is enough
to check that $\pi_*(f^{\Phi C_p})[b^{-1}]$ is an injection.

Again by (i), the map $f^{tC_p}$ is an equivalence. So, from the diagram
	\[
	\xymatrix{
	M^{\Phi C_p}[b^{-1}] \ar[r]\ar[d] & N^{\Phi C_p}[b^{-1}]\ar[d]\\
	M^{tC_p} \ar[r]_{\simeq} & N^{tC_p}
	}
	\]
we see that it is enough to check that the vertical maps
are injective on homotopy. More generally, we show that whenever
$X$ is a bounded below $\underline{\mathbb{Z}}$-module,
the map
	\[
	\pi_* X^{\Phi C_p}[b^{-1}] \to \pi_*X^{tC_p}
	\]
is injective. Indeed, by Proposition \ref{prop:tate-from-geo}
and the fact that the Tate construction commutes with
limits of Postnikov towers (see, e.g., \cite[I.2.6]{nikolaus-scholze}),
we have
	\[
	\lim_n \left((\tau_{\le n}X)^{\Phi C_p}[b^{-1}]\right)
	\stackrel{\simeq}{\to}
	\lim_n (\tau_{\le n}X)^{tC_p} \simeq X^{tC_p}.
	\]
Therefore, we need only check that
	\[
	\pi_*X^{\Phi C_p}[b^{-1}] \to 
	\pi_*\lim_n \left((\tau_{\le n}X)^{\Phi C_p}[b^{-1}] \right)
	\]
is injective. Since the maps $X^{\Phi C_p} \to (\tau_{\le n}X)^{\Phi C_p}$
have increasingly connective fibers, we can replace the left hand
side by $(\lim_n \pi_*(\tau_{\le n}X)^{\Phi C_p})[b^{-1}]$ and reduce
to showing that
	\[
	(\lim_n \pi_*(\tau_{\le n}X)^{\Phi C_p})[b^{-1}]
	\to \lim_n \pi_*\left((\tau_{\le n}X)^{\Phi C_p}[b^{-1}]\right)
	\]
is injective. Finally, this reduces to showing that the kernel of
	\[
	\lim_n \pi_*(\tau_{\le n}X)^{\Phi C_p}
	\to \lim_n \pi_*\left((\tau_{\le n}X)^{\Phi C_p}[b^{-1}]\right)
	\]
consists of elements annihilated by a power of $b$. This is clear because,
for each $j$, the system $\{\pi_j(\tau_{\le n}X)^{\Phi C_p}\}_n$
is eventually constant.
\end{proof}

\begin{proof}[Proof of Proposition \ref{prop:tate-from-geo}]
Let $\mathcal{E}$ denote the full subcategory of
$\underline{\mathbb{Z}}$-modules $M$ for which
	\[
	M^{\Phi C_p}[b^{-1}] \to M^{tC_p}
	\]
is an equivalence. Then $\mathcal{E}$ is stable, closed under
retracts, and closed under suspending by representation
spheres.

The map $M^{\Phi C_p}[b^{-1}] \to M^{tC_p}$ is one
of $\underline{\mathbb{Z}}^{\Phi C_p}=\mathbb{F}_p[b]$-modules,
and hence one of $\mathbb{F}_p$-modules,  so it must be a retract of
	\[
	(M/p)^{\Phi C_p}[b^{-1}] = M^{\Phi C_p}[b^{-1}]/p
	\to M^{tC_p}/p = (M/p)^{tC_p}.
	\]
Thus $M/p \in \mathcal{E}$ if and only if $M \in \mathcal{E}$. 
So, by replacing $M$ with $M/p$ and
considering the Postnikov tower, we are
reduced to proving the proposition in the case where
$M \in \mathsf{Mod}^{\heartsuit}_{\underline{\mathbb{Z}}}$
is a Mackey functor which is a module over $\underline{\mathbb{F}}_p$.

In particular, $M^e$ is an $\mathbb{F}_p[C_p]$-module. 
Let $\gamma$ denote the generator of $C_p$ so that
$\mathbb{F}_p[C_p] = \mathbb{F}_p[\gamma]/(1-\gamma)^p$. 
Let $F_jM \subseteq M$ be the sub-Mackey functor generated
by $(1-\gamma)^jM^e \subseteq M^e$. This is a finite filtration
with associated graded pieces given by Mackey functors
with trivial underlying action. So, since $\mathcal{E}$ is
a thick subcategory, we are reduced to the case when 
$M$ is a discrete $\underline{\mathbb{F}}_p$-module with
trivial underlying action.

For the next reduction we recall some notation. If $N$ is
any Mackey functor, denote by $N_{C_p}$ the
Mackey functor $N\otimes C_{p+}$ and, if $A$ is an
abelian group, denote by $\underline{A}_{\mathrm{tr}}$
the Mackey functor whose transfer map is the identity on $A$
and whose restriction map is multiplication by $p$. 
We also recall that the transfer extends to a map of 
Mackey functors $\mathrm{tr}: N_{C_p} \to N$. 

Now consider the two exact sequences
	\[
	0 \to \mathrm{im}(\mathrm{tr}) \to M \to
	M/\mathrm{im}(tr) \to 0
	\]
	\[
	0 \to \mathrm{ker}(\mathrm{tr}) \to \underline{M}^e_{\mathrm{tr}} \to
	\mathrm{im}(\mathrm{tr}) \to 0
	\]

If $N$ is any Mackey functor with $N^e = 0$, then
$N \in \mathcal{E}$ since then $N=N^{\Phi C_p}$ is bounded above
and hence $N^{\Phi C_p}[b^{-1}]=0$. 
Thus, from the exact sequences above,
we are reduced to the case where $M$ is of the form
$\underline{V}_{\mathrm{tr}}$ for an
$\mathbb{F}_p$-vector space $V$ (with trivial action).
Now recall that $(\underline{\mathbb{F}}_p)_{\mathrm{tr}}
= \Sigma^{2-\lambda}\underline{\mathbb{F}}_p$
and hence $\underline{V}_{\mathrm{tr}} = \Sigma^{2-\lambda}\underline{V}$.
So we are reduced to showing that the constant Mackey functor
$\underline{V}$ lies in $\mathcal{E}$, where $V$
is an $\mathbb{F}_p$-vector space with trivial action.
This certainly holds for $V=\mathbb{F}_p$, and in general we have
	\[
	\underline{V}^{\Phi C_p} \simeq 
	\underline{\mathbb{F}}^{\Phi C_p}_p \otimes_{\mathbb{F}_p} V,
	\]
since geometric fixed points commutes with colimits, and
	\[
	V^{tC_p} \simeq \mathbb{F}_p^{tC_p} \otimes_{\mathbb{F}_p}V
	\]
by direct calculation. (Notice this holds even when $V$
is infinite-dimensional). This completes the proof.
\end{proof}

\section{Proof of the main theorem}\label{sec:proof}

We are now ready to prove the main theorem. Recall that
we have constructed classes
	\[
	t_i \in \pi_{2p^{i-1}\rho_{C_p} - \lambda}(
	\underline{\mathbb{Z}}_{(p)} \otimes
	\underline{\mathbb{Z}}_{(p)}),
	\]
and shown that $\theta t_i = 0$ and $pN(t_i) = 0$.
With notation as in the introduction, let
	\[
	X_i = \left( S^0 \oplus (S^0[Nt_i] \otimes T_{\theta}(t_i))\right)
	\]
and
	\[
	X = \bigotimes_{i\ge 1}\left( S^0 \oplus (S^0[Nt_i] \otimes T_{\theta}(t_i))\right)
	\]
Then, choosing nullhomotopies which witness $\theta t_i = 0$,
we get a map:
	\[
	f: \underline{\mathbb{Z}}_{(p)} \otimes
	\bigotimes_{i\ge 1}
	\left( S^0 \oplus (S^0[Nt_i] \otimes T_{\theta}(t_i))\right)
	\longrightarrow 
	\underline{\mathbb{Z}}_{(p)} \otimes \underline{\mathbb{Z}}_{(p)}
	\]

The main theorem is then the statement:

\begin{theorem} The map $f$ is an equivalence.
\end{theorem}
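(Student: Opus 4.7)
The strategy, following Step 3 of the outline, is to apply Proposition \ref{prop:detect-equiv} to $f$. Both source and target are bounded-below $\underline{\mathbb{Z}}_{(p)}$-modules, so it suffices to check three conditions: (i) $f$ is an underlying equivalence; (ii) $\pi_*\mathrm{Source}^{\Phi C_p}$ and $\pi_*\mathrm{Target}^{\Phi C_p}$ are finite-dimensional of matching rank in each degree; and (iii) both are graded-free $\mathbb{F}_p[b]$-modules.

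Step (i) is essentially bookkeeping. The forgetful functor to ordinary spectra is symmetric monoidal, sending $\underline{\mathbb{Z}}_{(p)}$ to $\mathbb{Z}_{(p)}$, the map $\theta$ to a degree $p$ map, the cofiber $C\theta$ to the Moore spectrum $M(p)$, and $N(t_i)$ to $t_i^{\otimes p}$. Assembling the four types of summand in $T_{\theta}(t_i)$ with the free $\mathbb{E}_1$-algebra $S^0[Nt_i]$, the underlying of $S^0 \oplus (S^0[Nt_i] \otimes T_{\theta}(t_i))$ is recognizable as
\[
S^0 \oplus \bigoplus_{n\ge 1} \Sigma^{n|t_i|}M(p) \simeq \mathrm{cofib}\!\left(\Sigma^{|t_i|}S^0[t_i] \stackrel{\cdot p t_i}{\longrightarrow} S^0[t_i]\right),
\]
so the underlying of $f$ reproduces the classical Rognes decomposition recalled in the introduction and is an equivalence.

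For steps (ii) and (iii) one computes the geometric fixed points of each side. Symmetric monoidality of $\Phi^{C_p}$ gives
\[
\mathrm{Source}^{\Phi C_p} \simeq \mathbb{F}_p[b] \otimes \bigotimes_{i\ge 1} \left( S^0 \oplus S^0[(Nt_i)^{\Phi C_p}] \otimes T_{\theta}(t_i)^{\Phi C_p} \right).
\]
The crucial local inputs are: $\theta^{\Phi C_p}\colon S^{-2}\to S^0$ is null at odd primes because $\pi_2 S^0_{(p)}=0$, so $C\theta^{\Phi C_p}\simeq S^{-1}\oplus S^0$; the Moore spectrum has trivial $C_p$-action so $M(p)^{\Phi C_p}=M(p)$, and smashing with $\mathbb{F}_p[b]$ splits it as $\mathbb{F}_p[b]\oplus \Sigma\mathbb{F}_p[b]$; and $(Nt_i)^{\Phi C_p}$ is a polynomial generator in degree $2p^i-2$ (since $(\rho_{C_p})^{\Phi C_p}=1$). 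These ingredients exhibit $\pi_*\mathrm{Source}^{\Phi C_p}$ explicitly as a graded-free $\mathbb{F}_p[b]$-module. For the target, symmetric monoidality of $\Phi^{C_p}$ identifies $(\underline{\mathbb{Z}}_{(p)}\otimes\underline{\mathbb{Z}}_{(p)})^{\Phi C_p}$ with $\mathbb{F}_p[b]\otimes\mathbb{F}_p[b]$, which in turn is $\mathcal{A}_*[b,\overline{b}]$ as mentioned in the introduction, and is visibly graded-free over $\mathbb{F}_p[b]$.

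The principal obstacle is the degree-by-degree matching of ranks in (ii): one must check that the Hilbert series of $\pi_*\mathrm{Source}^{\Phi C_p}$ as an $\mathbb{F}_p[b]$-module matches that of $\mathcal{A}_*[\overline{b}]$. This is where the refinements in Construction~1 earn their keep---the mixture of ordinary powers $\hat{t}_i$ with a norm class $\widehat{Nt_i}$, and the replacement of $p$ copies of $C\theta$ by $(p{-}1)$ copies together with one Moore cell $\Sigma^{|t_i|\rho_{C_p}}M(p)$, is precisely calibrated to reproduce the $\tau_i$, $\xi_i$, and $\overline{b}^k$ generators of the target. Once this Hilbert series comparison is in hand, Proposition \ref{prop:detect-equiv} immediately yields that $f$ is an equivalence.
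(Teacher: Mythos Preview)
Your overall approach matches the paper's exactly: apply Proposition~\ref{prop:detect-equiv} after verifying the underlying equivalence and the geometric fixed point conditions. However, you have left the actual content of the proof undone. You yourself identify the Hilbert series comparison as ``the principal obstacle'' and then conclude with ``once this Hilbert series comparison is in hand''---but you never put it in hand. The paper devotes most of its effort here to an explicit combinatorial bijection: it writes down bases $M_{I,J,K}$ and $N_{I,J,K}$ for the two sides, indexed by the same quadruples $(I,J,K,K')$ of sequences subject to support constraints, and checks that corresponding basis elements have the same degree. This is not automatic from the construction; the mixture of $C\theta$ cells and the Moore cell in $T_{\theta}(t_i)$ produces, on geometric fixed points, a ring with relations like $d_{(i)}^p=0$, $d_{(i-1)}\tau_i=0$, $d_{(i)}^{p-1}\sigma_i=0$, $\sigma_{i-1}\tau_i=0$, and matching this against $\Lambda(\tau_i)\otimes\mathbb{F}_p[b]$ is genuine work. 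Saying the construction was ``precisely calibrated'' is an assertion, not an argument.

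A smaller gap: your treatment of step~(i) is too breezy. Identifying the underlying \emph{spectrum} of the source with $\bigotimes_i S^0[t_i]/(pt_i)$ is fine, but you must also identify the underlying \emph{map} with the classical one, and this depends on how the chosen nullhomotopies of $\theta t_i$ and $pN(t_i)$ restrict. The paper handles this by first observing that the nullhomotopy for $pN(t_i)$ was built from the one for $\theta t_i$ (so $\widehat{Nt_i}$ restricts to $t_i^{p-1}\hat{t}_i$), and then running a Bockstein argument on mod~$p$ homology: the extension class $x_i$ satisfies $\beta x_i = t_i \mapsto \xi_i$, forcing $x_i \mapsto \tau_i$ modulo decomposables. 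Your appeal to the Rognes decomposition skips this identification. (Also, a minor slip: $\theta^{\Phi C_p}\colon S^{-2}\to S^0$ lies in $\pi_{-2}S^0=0$, not $\pi_2$; the conclusion is correct but for a simpler reason than you state.)
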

\begin{proof} Combine Proposition \ref{prop:detect-equiv}
with the two lemmas below.
\end{proof}

\begin{lemma} The map $f^e$ is an underlying equivalence.
\end{lemma}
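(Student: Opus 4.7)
The plan is to reduce to the classical $p$-local equivalence
\[
\mathbb{Z}_{(p)} \otimes \mathbb{Z}_{(p)} \simeq \mathbb{Z}_{(p)} \otimes \bigotimes_{i\ge 1} \mathrm{cofib}\bigl(\Sigma^{|t_i|}S^0[t_i] \xrightarrow{\cdot pt_i} S^0[t_i]\bigr)
\]
recalled in the introduction. The first step is to identify each underlying building block $X_i^e$ with the $i$-th classical cofiber. Using $C\theta^e \simeq M(p)$, $|t_i|^e = 2p^i - 2$, and $|Nt_i|^e = p|t_i|^e$, unwinding the definition of $T_\theta$ gives
\[
T_\theta(t_i)^e \simeq \bigoplus_{j=1}^{p} \Sigma^{j|t_i|^e} M(p),
\]
and tensoring with $S^0[Nt_i]^e \simeq \bigoplus_{k \ge 0} \Sigma^{kp|t_i|^e} S^0$ and reindexing by $m = kp + j$ yields
\[
X_i^e \simeq S^0 \oplus \bigoplus_{m \ge 1} \Sigma^{m|t_i|^e} M(p),
\]
which is precisely the underlying spectrum of the classical cofiber, cell-by-cell.

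The second step is to check that under this identification, $f^e$ agrees with the classical splitting map. By construction, $t_i$ is the homology suspension of $e_{p^i}$, and $e_{p^i}^e$ is (up to a $p$-local unit) the standard divided power class $\beta_{(i)} = \gamma_{p^i}\beta_1 \in H_*(\mathbb{C}P^\infty; \mathbb{Z}_{(p)})$, whose homology suspension is the classical Milnor generator $t_i$. Since $\mathrm{res}(\theta) = p$, our chosen nullhomotopy of $\theta t_i$ restricts to a nullhomotopy of $pt_i^e$, so the underlying extension $\hat{t}_i : \Sigma^{|t_i|^e} M(p) \to \mathbb{Z}_{(p)} \otimes \mathbb{Z}_{(p)}$ is exactly the cell attachment producing $t_i^e$ in the classical cofiber. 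The underlying of $Nt_i$ is the Galois product $\prod_g(g \cdot t_i^e) = (t_i^e)^p$, so the $S^0[Nt_i]$ factor contributes the polynomial powers $(t_i^e)^{kp}$, on top of which the cells of $T_\theta(t_i)$ stack to produce all the remaining $(t_i^e)^m \cdot M(p)$ cells.

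Finally, invoking the classical equivalence displayed above, together with this cell-by-cell comparison, gives that $f^e$ is an equivalence.

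The main obstacle is keeping careful track of the reindexing in the first step and verifying in the second that the cells and extensions really match the classical ones. This is fine provided one is comfortable absorbing $p$-local unit rescalings of the generators into the comparison, which is standard since any such rescaling still induces an equivalence on each Moore space cell.
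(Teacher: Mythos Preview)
Your approach and the paper's begin in the same place---identifying the underlying of each $X_i$ with the classical cofiber $S^0[t_i]/(pt_i)$ and arguing that $f^e$ restricts to the expected map built from the multiplicative structure---but they diverge at the end. The paper does \emph{not} invoke the displayed classical equivalence from the introduction; instead it passes to mod $p$ homology, where $f^e$ becomes a ring map $\mathbb{F}_p[t_i]\otimes\Lambda(x_i)\to\mathbb{F}_p[\xi_i]\otimes\Lambda(\tau_i)$, and uses a Bockstein argument ($\beta x_i=t_i\Rightarrow f^e_*(x_i)\equiv\tau_i$ mod decomposables) to conclude. Your route is cleaner if one is willing to take the classical splitting as given, while the paper's is self-contained and avoids any worry about matching choices of generators or nullhomotopies with those implicit in the classical statement.

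One point you gloss over that the paper is careful about: you check that $(Nt_i)^e=(t_i^e)^p$ on bottom cells, but you do not verify that the \emph{extension} $\widehat{Nt_i}\colon \Sigma^{|t_i|\rho_{C_p}}M(p)\to\underline{\mathbb{Z}}\otimes\underline{\mathbb{Z}}$ restricts on underlying spectra to $t_i^{p-1}\hat t_i$. This is exactly what is needed for the $m=p$ cell in your reindexing to line up with the multiplicatively-defined cell in the classical cofiber, and it is not automatic: it hinges on the fact (used in the Construction and in the proof of Lemma~\ref{lem:pn=0}) that the nullhomotopy witnessing $pN(t_i)=0$ was \emph{derived} from the chosen nullhomotopy of $\theta t_i$ via the identity $pN(t_i)=\theta t_i^p$, and hence restricts to $t_i^{p-1}$ times the chosen nullhomotopy of $pt_i^e$. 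With that sentence added, your argument is complete.
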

\begin{proof} First observe that, by our construction
in the proof of Lemma \ref{lem:pn=0},
the map $\widehat{N(t_i)}$ restricts to the map
$t_i^{p-1}\hat{t}_i$, since the nullhomotopy witnessing
$pN(t_i) = 0$ was chosen to restrict to the nullhomotopy
chosen for $pt_i^p$ that came from the already chosen
nullhomotopy of $pt_i$. The upshot is that the map
	\[
	S^0 \oplus S^0[Nt_i] \otimes T_{\theta}(t_i) \to
	\underline{\mathbb{Z}} \otimes \underline{\mathbb{Z}}
	\]
restricts on underlying spectra to the map
	\[
	S^0[t_i]/(pt_i) \to 
	\mathbb{Z} \otimes \mathbb{Z}
	\]
obtained just from the relation $pt_i = 0$ and extended
via the multiplicative structure.

In particular, on mod $p$ homology $f^e$ induces a ring map
	\[
	\mathbb{F}_p[t_i] \otimes \Lambda(x_i) \to 
	\mathbb{F}_p[\xi_i] \otimes \Lambda(\tau_i).
	\]
We know that $t_i$ maps to $\xi_i$ and that
$\beta x_i = t_i$, so that $\beta(f_*^e(x_i)) = \xi_i$.
Modulo decomposables, $\tau_i$ is
the only element whose Bockstein
is $\xi_i$. So $x_i$ must map to $\tau_i$, mod decomposables.
It follows that $f^e$ is a mod $p$ equivalence, and hence
an equivalence.
\end{proof}

\begin{lemma} $(\underline{\mathbb{Z}} \otimes X)^{\Phi C_p}$
and $(\underline{\mathbb{Z}} \otimes \underline{\mathbb{Z}})^{\Phi C_p}$
are free $\mathbb{F}_p[b]$-modules, finite-dimensional in each degree,
and isomorphic as graded vector spaces over $\mathbb{F}_p$.
\end{lemma}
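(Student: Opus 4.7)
\medskip

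\noindent\textbf{Plan.} I start with the target: since geometric fixed points is symmetric monoidal and $\underline{\mathbb{Z}}_{(p)}^{\Phi C_p} \simeq H\mathbb{F}_p[b]$, the $\Phi^{C_p}$ of $\underline{\mathbb{Z}} \otimes \underline{\mathbb{Z}}$ becomes $H\mathbb{F}_p[b] \otimes_{S^0} H\mathbb{F}_p[b']$. The classical Milnor computation $\pi_*(H\mathbb{F}_p \otimes_{S^0} H\mathbb{F}_p) = \mathcal{A}_*$ then yields $\pi_*\bigl((\underline{\mathbb{Z}}\otimes\underline{\mathbb{Z}})^{\Phi C_p}\bigr) \cong \mathcal{A}_* \otimes_{\mathbb{F}_p} \mathbb{F}_p[b, b']$ as a graded $\mathbb{F}_p$-vector space, which is manifestly a free $\mathbb{F}_p[b]$-module with basis $\mathcal{A}_* \otimes \mathbb{F}_p[b']$, finite-dimensional in each degree, with Poincar\'e series $P(\mathcal{A}_*;t)/(1-t^2)^2$ where $P(\mathcal{A}_*;t) = (1+t) \prod_{i \ge 1}(1+t^{2p^i-1})/(1-t^{2p^i-2})$.

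For the source, I use the representation-sphere identity $(S^V)^{\Phi C_p} \simeq S^{\dim V^{C_p}}$ to read off sphere shifts, and identify the cellular building blocks after smashing with $\underline{\mathbb{Z}}$. Here the key observation is that $\theta^{\Phi C_p}$, viewed as a map of $\mathbb{F}_p[b]$-modules $\Sigma^{-2}\mathbb{F}_p[b] \to \mathbb{F}_p[b]$, is forced to be zero since $\pi_{-2}\mathbb{F}_p[b] = 0$. Hence $(\underline{\mathbb{Z}} \otimes C\theta)^{\Phi C_p} \simeq \mathbb{F}_p[b] \oplus \Sigma^{-1}\mathbb{F}_p[b]$ and, analogously, $(\underline{\mathbb{Z}} \otimes M(p))^{\Phi C_p} \simeq \mathbb{F}_p[b] \oplus \Sigma\mathbb{F}_p[b]$. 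Combined with $(S^0[Nt_i])^{\Phi C_p} \simeq S^0[y_i]$ (free $E_1$-algebra on $y_i$ in degree $2p^i - 2$), the tensor decomposition $X = \bigotimes_i X_i$ shows that $(\underline{\mathbb{Z}} \otimes X)^{\Phi C_p}$ is a free $\mathbb{F}_p[b]$-module, with finite-dimensionality in each degree immediate from the convergent Poincar\'e series below.

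It remains to match Poincar\'e series. Setting $q_i = 2p^{i-1}$, a direct count of generators gives
\begin{equation*}
\sum_n \dim_{\mathbb{F}_p} H_n\bigl((X_i)^{\Phi C_p}; \mathbb{F}_p\bigr) \, t^n = 1 + \frac{(1+t)\bigl[t^{q_i-1} + t^{2q_i-1} + \cdots + t^{(p-1)q_i-1} + t^{pq_i-2}\bigr]}{1 - t^{pq_i-2}} = \frac{(1+t^{q_i-1})(1-t^{pq_i})}{(1-t^{q_i})(1-t^{pq_i-2})},
\end{equation*}
the last equality using the elementary identity $1 + (1+t)\sum_{k=1}^{p-1} t^{kq-1} + t^{pq-1} = (1+t^{q-1}) \sum_{k=0}^{p-1} t^{kq}$. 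Taking the product over $i \ge 1$ and applying the telescoping cancellation $\prod_{i \ge 1}(1-t^{2p^i})/\prod_{i \ge 1}(1-t^{2p^{i-1}}) = 1/(1-t^2)$ collapses the product to $P(\mathcal{A}_*;t)/(1-t^2)$; multiplying by the $\mathbb{F}_p[b]$ factor $1/(1-t^2)$ recovers the target. The main obstacle is the generating-function bookkeeping in this final step; everything else is essentially automatic once one observes that $\theta$ dies after base-change to $\mathbb{F}_p[b]$.
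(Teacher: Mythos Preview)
Your proof is correct, and your approach to the dimension count is genuinely different from the paper's. Both arguments establish freeness over $\mathbb{F}_p[b]$ in the same way (via $(\underline{\mathbb{Z}}\otimes Y)^{\Phi C_p}\simeq \mathbb{F}_p[b]\otimes_{\mathbb{F}_p}(\mathbb{F}_p\otimes Y^{\Phi C_p})$), and both identify the target with $\mathcal{A}_*\otimes_{\mathbb{F}_p}\mathbb{F}_p[b,b']$. The divergence is in the bookkeeping: the paper writes down an explicit bijection between monomial bases of the two sides, encoding a monomial in $\mathbb{F}_p[b]\otimes\Lambda(\tau_i)$ by its base-$p$ digit pattern and matching it to a monomial in the truncated polynomial and exterior classes on the source side. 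Your Poincar\'e series computation replaces this combinatorial matching with the telescoping identity $\prod_{i\ge 1}(1-t^{2p^i})/(1-t^{2p^{i-1}})=1/(1-t^2)$, together with the factorization you proved for each $X_i$. The generating-function route is arguably cleaner and less error-prone than tracking the monomial constraints $I\cdot K=J\cdot K=0$ and $K'\le K$; on the other hand, the paper's explicit bijection hints at what the geometric fixed points map might actually do to basis elements, which could be useful for the questions posed in the introduction about identifying $\hat{t}_i$ on geometric fixed points.

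One small phrasing issue: you describe $\theta^{\Phi C_p}$ as ``a map of $\mathbb{F}_p[b]$-modules,'' but $\theta^{\Phi C_p}$ is just a map $S^{-2}\to S^0$ of spectra; it is only after smashing with $\underline{\mathbb{Z}}^{\Phi C_p}=\mathbb{F}_p[b]$ that you get an $\mathbb{F}_p[b]$-module map $\Sigma^{-2}\mathbb{F}_p[b]\to\mathbb{F}_p[b]$, which is then forced to vanish. This is surely what you meant, and the conclusion is the same.
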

\begin{proof} If $Y$ is any $C_p$-spectrum, then
	\[
	(\underline{\mathbb{Z}}_{(p)} \otimes Y)^{\Phi C_p}
	=
	\mathbb{F}_p[b] \otimes Y^{\Phi C_p}
	\simeq
	\mathbb{F}_p[b] \otimes_{\mathbb{F}_p} 
	(\mathbb{F}_p \otimes Y^{\Phi C_p})
	\]
is a free $\mathbb{F}_p[b]$-module. Applying this in the cases
$Y=X$ and $Y=\underline{\mathbb{Z}}$, we see that each
is a free $\mathbb{F}_p[b]$, evidently finite-dimensional
in each degree. So it suffices to prove that
	\[
	\mathbb{F}_p\otimes X^{\Phi C_p} \cong
	\mathbb{F}_p\otimes (\mathbb{F}_p[b])
	\]
as graded vector spaces. Notice that we can write,
\emph{as graded vector spaces},
	\[
	\mathbb{F}_p \otimes X_i^{\Phi C_p}
	\cong \mathbb{F}_p[d_{(i-1)}, \xi_i] \otimes_{\mathbb{F}_p} 
	\Lambda(\sigma_{i-1}, \tau_i)/
	(d_{(i-1)}^p, d_{(i-1)}\tau_i, d_{(i-1)}^{p-1}\sigma_{i-1},
	\sigma_{i-1}\tau_i),
	\]
where $|\sigma_{i-1}| = 2p^{i-1}-1$ and $|d_{(i-1)}|=2p^{i-1}$.
Indeed, $\hat{t}_i$, on geometric fixed points, gives rise to
two classes; one we are calling $d_{(i-1)}$ and the other 
we are calling $\sigma_{i-1}$. Similarly, $\widehat{N(t_i)}$,
on geometric fixed points, gives rise to two classes: one
we are calling $\xi_i$ and the other $\tau_i$, in their usual
degrees. The relations are the ones needed to ensure
that the monomials not arising from geometric fixed points
of elements in $X_i$ are omitted. 

It follows that we have an isomorphism of graded vector spaces
	\[
	\mathbb{F}_p \otimes X^{\Phi C_p} \cong 
	\mathbb{F}_p[\xi_n: n\ge 1]
	\otimes_{\mathbb{F}_p} \mathbb{F}_p[d_{(i)} : i\ge 0] 
	\otimes_{\mathbb{F}_p}
	\Lambda(\sigma_j, \tau_k: j\ge 0, k\ge 1)/
	(d_{(i)}^p, d_{(i-1)}\tau_i, d_{(i)}^{p-1}\sigma_{i},
	\sigma_{i-1}\tau_i).
	\]
We are trying to show that this is isomorphic, as a graded vector
space to
	\[
	\mathbb{F}_p \otimes \mathbb{F}_p[b]
	\cong
	\mathbb{F}_p[\xi_n:n\ge 1] \otimes_{\mathbb{F}_p} \Lambda(\tau_i:i\ge 0)
	\otimes_{\mathbb{F}_p} \mathbb{F}_p[b].
	\]
We may regard each vector space as a module over
$\mathbb{F}_p[\xi_n:n\ge 0]$ in the evident way, and hence
reduce to showing that the two vector spaces
	\[
	V= \Lambda(\tau_i:i\ge 0)
	\otimes_{\mathbb{F}_p} \mathbb{F}_p[b]
	\]
and 
	\[
	W= \mathbb{F}_p[d_{(i)} : i\ge 0] \otimes_{\mathbb{F}_p}
	\Lambda(\sigma_j, \tau_k: j\ge 0, k\ge 1)/
	(d_{(i)}^p, d_{(i-1)}\tau_i, d_{(i)}^{p-1}\sigma_{i},
	\sigma_{i-1}\tau_i)
	\]
are isomorphic. (Here recall that $|\sigma_i|=|\tau_i|=2p^i-1$,
$|b|=2$, and $|d_{(i)}|=2p^i$).

Let $I$ range over sequences $(a_0, a_1, ...)$ with
$0\le a_i\le p-2$, $J$ range
over sequences $(\varepsilon_0, \varepsilon_1, ...)$
with $\varepsilon_i \in \{0,1\}$, $K$
range over sequences $(\kappa_0, \kappa_1, ...)$
with $\kappa_i \in\{0,1\}$, and let $K'$ 
range over sequences $(\kappa'_0, \kappa'_1, ...)$
with $\kappa'_i \in \{0,1\}$. 
 We impose the following requirements
 on these sequences:
 	\begin{itemize}
	\item Each sequence has finite support.
	\item If $\kappa'_i = 1$, then $\kappa_i = 1$.
	(So $K'$ is otained from $K$ by changing some
	subset of $1$s to $0$s).
	\item $J\cdot K = I\cdot K = (0,0,...)$. That is:
	$I$ and $K$ have disjoint support and $J$ and $K$
	have disjoint support.
	\end{itemize}
Then $V$
has a basis of monomials
	\[
	M_{I, J,K} =
	(\prod_{i\ge 0} b^{a_ip^i})\tau_J(\prod_{i\ge 0}
	b^{\kappa_i(p-1)p^i})\tau_{K'}
	\]
and $W$ has a basis of monomials
	\[
	N_{I,J,K} = d_I\sigma_J(\prod_{i\ge 0}
	d_{(i)}^{(\kappa_i-\kappa'_i)(p-1)})\tau_{K'[1]}
	\]
where $K'[1] = (0, \kappa'_0, \kappa'_1, ...)$.
These have the same number of basis elements in each
dimension, so $V \cong W$.
\end{proof}

\bibliographystyle{amsalpha}
\bibliography{Bibliography}

\end{document}